\documentclass{amsart}

\usepackage{amssymb}
\usepackage{amsmath}
\usepackage{amscd}
\usepackage{psfrag}
\usepackage{graphicx} 


\newcommand{\C}{\mathbf{C}}
\newcommand{\R}{\mathbf{R}}
\newcommand{\Z}{\mathbf{Z}}

\newcommand{\transp}{\mathrm{\, }^t\hskip-.07cm}


\theoremstyle{plain}
\newtheorem{theorem}{Theorem}[section]

\newtheorem{proposition}[theorem]{Proposition}
\newtheorem{lemma}[theorem]{Lemma}


\theoremstyle{definition}
\newtheorem{definition}[theorem]{Definition}

\theoremstyle{remark}
\newtheorem{remark}[theorem]{Remark}

\newtheorem{example}[theorem]{Example}

\makeatletter
 
 \@addtoreset{equation}{section}
\makeatother
\title{Cobordism of algebraic knots defined by
Brieskorn polynomials}
\author{Vincent Blanl\oe il}
\address{D\'epartement de Math\'ematiques,
Universit\'e de Strasbourg,
7 rue  Ren\'e Descartes,
67084 Strasbourg cedex, France}
\email{blanloeil@math.u-strasbg.fr}

\author{Osamu Saeki}
\thanks{The second
author has been partially
supported by Grant-in-Aid for Scientific Research
(No.~19340018), Japan Society for the Promotion of Science.
He has also been partially supported by
the Louis Pasteur University of Strasbourg, France,
and by FY 2008 
Researcher Exchange Program between JSPS and CNRS}
\address{Faculty of Mathematics, Kyushu University,
Hakozaki, Fukuoka 812-8581, Japan}
\email{saeki@math.kyushu-u.ac.jp}

\subjclass[2000]{Primary 57Q45; 
Secondary
57Q60, 
32S55. 
}

\keywords{Knot cobordism, algebraic knot,
Brieskorn singularity, weighted homogeneous polynomial,
Seifert form, Witt equivalence}

\date{\today}

\begin{document}

\begin{abstract} 
In this paper we study the cobordism of algebraic knots 
associated with weighted homogeneous polynomials, 
and in particular Brieskorn polynomials.
Under some
assumptions we prove that the associated algebraic
knots are cobordant 
if and only if the Brieskorn polynomials have 
the same exponents.
\end{abstract}

\maketitle 

\section{Introduction}\label{section1}

A \emph{Brieskorn polynomial} is a polynomial of the form
$$P(z) = z_1^{a_1} + z_2^{a_2} + \cdots + z_{n+1}^{a_{n+1}}$$
with $z = (z_1, z_2, \ldots, z_{n+1})$, $n \geq 1$, where
the integers $a_j \geq 2$, $j = 1, 2, \ldots, n+1$, 
are called the \emph{exponents}.
The complex hypersurface in $\C^{n+1}$ defined by $P=0$
has an isolated singularity at the origin, which is
called a \emph{Brieskorn singularity}.

In this paper, we will study Brieskorn singularities 
up to cobordism.
We prove that two Brieskorn singularities
have cobordant algebraic knots if and only if 
they have the same set of exponents, provided that no exponent 
is a multiple of another for each of the two Brieskorn polynomials. 
Consequently, for such Brieskorn polynomials the multiplicity is 
an invariant of the cobordism class of the associated 
algebraic knot. 

To be more precise, 
let $f : (\C^{n+1}, 0) \to (\C, 0)$ be a holomorphic function germ
with an isolated critical point at the origin.
We denote by $D^{2n+2}_{\varepsilon}$ the closed ball of radius 
$\varepsilon > 0$ centred at $0$ in $\C^{n+1}$, and by 
$S^{2n+1}_{\varepsilon}$ its boundary. According to Milnor
\cite{Milnor}, the oriented homeomorphism
class of the pair $(D^{2n+2}_{\varepsilon}, 
f^{-1}(0) \cap D^{2n+2}_{\varepsilon})$ does not depend on the
choice of a sufficiently small $\varepsilon > 0$, and
by definition it is the \emph{topological type} of $f$.
(For other equivalent definitions, we
refer the reader to \cite{King, Perron, Saeki89}.)
The oriented diffeomorphism class of the pair 
$(S^{2n+1}_{\varepsilon}, K_f)$, with
$K_f = f^{-1}(0) \cap S^{2n+1}_{\varepsilon}$, is the 
\emph{algebraic knot} associated with $f$, where
$K_f$ is a closed oriented $(2n-1)$-dimensional manifold.
According to Milnor's cone structure theorem
\cite{Milnor}, the algebraic knot $K_f$ determines the 
topological type of $f$. In fact, it is known
that the converse also holds.

\begin{definition}\label{dfn:cob}
An \emph{$m$-dimensional knot} (\emph{$m$-knot}, for short)
is a closed oriented $m$-dimensional submanifold
of the oriented $(m+2)$-dimensional sphere $S^{m+2}$.
Two $m$-knots $K_0$ and $K_1$ in 
$S^{m+2}$ are said to be \emph{cobordant} if there exists a 
properly embedded oriented $(m+1)$-dimensional
submanifold $X$ of $S^{m+2} \times [0,1]$ such that
\begin{enumerate}
\item $X$ is diffeomorphic to $K_0 \times [0,1]$, and
\item $\partial X = (K_0 \times \{0\}) \cup (-K_1 \times \{1\}).$
\end{enumerate}
Such a manifold $X$ is called a \emph{cobordism}
between $K_0$ and $K_1$ (see Fig.~\ref{fig2}).
\end{definition} 

\begin{figure}[tb]
\centering
 \begin{picture}(100,120)(0,0)
 \thicklines
 \qbezier(10,90)(30,50)(15,10)
 \put(21,57){\circle*{3}}
  \put(22,40){\circle*{3}}
 \put(8,48){\makebox{\small$K_{0}$}}
  \put(-5,100){\makebox{\small $ S^{m+2}\times\{0\}$}}
 \thinlines
 \qbezier[30](15,90)(35,50)(20,10)
 \qbezier[30](20,90)(40,50)(25,10)
 \qbezier[30](25,90)(45,50)(30,10)
 \qbezier[30](30,90)(50,50)(35,10)
 \qbezier[30](35,90)(55,50)(40,10)
 \qbezier[30](40,90)(60,50)(45,10)
 \qbezier[30](45,90)(65,50)(50,10)
 \qbezier[30](50,90)(70,50)(55,10)
 \qbezier[30](55,90)(75,50)(60,10)
 \qbezier[30](60,90)(80,50)(65,10)
 \qbezier[30](65,90)(85,50)(70,10)
 \qbezier[30](70,90)(90,50)(75,10)
 \qbezier[30](75,90)(95,50)(80,10)
 \qbezier[30](80,90)(100,50)(85,10)
 \qbezier[30](85,90)(105,50)(90,10)
 \qbezier(21,57)(95,35)(65,55)
 \qbezier(65,55)(35,75)(101,57)
 \qbezier(22,40)(35,20)(102,40)
 \thicklines
 \qbezier(90,90)(110,50)(95,10)
 \put(101,57){\circle*{3}}
 \put(102,40){\circle*{3}}
 \put(104,48){\makebox{\small$K_{1}$}}
 \put(85,100){\makebox{\small $ S^{m+2}\times\{1\}$}}
 \put(35,0){\makebox{\small $ S^{m+2}\times[0, 1]$}}
 \end{picture}
\caption{A cobordism between $K_0$ and $K_1$}
\label{fig2}
\end{figure}
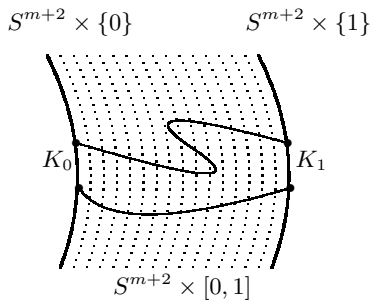

In \cite{BM}, for $n \geq 3$, necessary and sufficient
conditions for two algebraic $(2n-1)$-knots to be cobordant
have been obtained in terms of Seifert forms (for the
definition of the Seifert form, see \S\ref{section2}).
However, the computation of the Seifert form
of a given algebraic knot is very difficult,
and an explicit calculation 
is known only for a very limited class of algebraic
knots. (In fact, even for algebraic knots associated
with weighted homogeneous polynomials, Seifert forms
have not been determined yet, as far as the authors know.)
Furthermore, even if we know 
the Seifert forms explicitly, it is still difficult to see 
if given two such forms satisfy the 
algebraic conditions given in \cite{BM} or not.
So, it is worthwhile to study the conditions
for two algebraic knots associated with
weighted homogeneous polynomials to be cobordant.
We note that cobordism does not necessarily
imply isotopy for algebraic knots in general. For
details, see the survey article \cite{BS}.

It is known that cobordant algebraic knots
have Witt equivalent Seifert forms (for
details, see \S\ref{section2}).
In this paper, we give a necessary and sufficient
condition for two algebraic knots associated
with weighted homogeneous polynomials to have Witt
equivalent Seifert forms over the real numbers
in terms of their weights. Using this result,
we give some conditions for two algebraic
knots associated with Brieskorn polynomials to
be cobordant in terms of the exponents.
Under some assumptions, we show that two
such knots are cobordant if and only if
the Brieskorn polynomials have the same set of exponents.

The paper is organized as follows. In \S\ref{section2},
we state our results. We give a necessary and
sufficient condition for two nondegenerate
weighted homogeneous polynomials to have
Witt equivalent Seifert forms over the real
numbers, in terms of their weights.
Then, we give more explicit results
for Brieskorn polynomials. In \S\ref{section3},
we prove the results stated in \S\ref{section2}.
In \S\ref{section4}, we give more
precise results in the case of two and
three variables.

Throughout the paper we work in the smooth category.
All the homology groups are with integer coefficients
unless otherwise specified.

\section{Results}\label{section2}

Let $f(z)$ be a polynomial in $\C^{n+1}$ with
an isolated critical point at the origin.
We denote by $F_f$ the \emph{Milnor fiber} associated with
$f$, i.e., $F_f$ is the closure of a fiber
of the Milnor fibration 
$\varphi_f : S^{2n+1}_\varepsilon \setminus K_f \to S^1$ 
defined by $\varphi_f(z) = f(z)/|f(z)|$. 
According to Milnor \cite{Milnor}, $F_f$ is a compact
$2n$-dimensional submanifold of $S^{2n+1}_\varepsilon$
which is homotopy equivalent to the bouquet of a finite
number of copies of the $n$-dimensional sphere.

The Seifert form 
$$L_f : H_n(F_f) \times H_n(F_f) \to \Z$$
associated with $f$ is defined by
$$L_f(\alpha, \beta) = \mathrm{lk}(a_+, b),$$
where $a$ and $b$ are $n$-cycles representing
$\alpha$ and $\beta$ in $H_n(F_f)$ respectively,
$a_+$ is the $n$-cycle in $S^{2n+1}_\varepsilon$ obtained
by pushing $a$ into the positive normal direction of
$F_f$, and $\mathrm{lk}$ denotes 
the linking number of $n$-cycles in $S^{2n+1}_{\varepsilon}$.
It is known that the isomorphism class of the
Seifert form is a topological invariant of $f$.
Furthermore, two algebraic knots $K_f$ and $K_g$
associated with polynomials $f$ and $g$ in $\C^{n+1}$,
respectively, with isolated critical points at the origin
are isotopic in $S^{2n+1}_\varepsilon$ if and only if
their Seifert forms $L_f$ and $L_g$ are isomorphic,
provided that $n \geq 3$.

In fact, 
algebraic knots are \emph{simple fibered} knots as follows.
We say that an oriented $m$-knot $K$ is \emph{fibered}
if there exists a smooth
fibration $\phi : S^{m+2} \setminus K \to S^1$ 
and a trivialization $\tau : N_K \to K \times D^2$
of a closed tubular neighborhood $N_K$ of $K$
in $S^{m+2}$ such that $\phi|_{N_K \setminus K}$
coincides with $\pi \circ \tau|_{N_K \setminus K}$,
where $\pi : K \times (D^2 \setminus \{0\}) \to
S^1$ is the composition of the projection to the
second factor and the obvious projection
$D^2 \setminus \{0\} \to S^1$.
Note that then the closure of each fiber of $\phi$
in $S^{m+2}$
is a compact $(m+1)$-dimensional oriented manifold
whose boundary coincides with $K$.
We shall often call the 
closure of each fiber simply a \emph{fiber}.
Moreover, for $m=2n-1 \geq 1$ we say that a fibered $(2n-1)$-knot 
$K$ is \emph{simple} if each fiber of $\phi$ is 
$(n-1)$-connected and $K$ is $(n-2)$-connected.
For details we refer the reader to \cite{BS}.
Note that two simple fibered
$(2n-1)$-knots are isotopic if and only if
they have isomorphic Seifert forms, provided
$n \geq 3$ (see \cite{Durfee, Kato}).

\begin{definition}
Two bilinear forms $L_i : G_i \times G_i \to \Z$,
$i = 0, 1$, defined on free abelian groups $G_i$ of
finite ranks are said to be \emph{Witt equivalent}
if there exists a direct summand $M$ of $G_0 \oplus
G_1$ such that $(L_0 \oplus (-L_1))(x, y) = 0$
for all $x, y \in M$ and twice the rank of $M$
is equal to the rank of $G_0 \oplus G_1$.
In this case, $M$ is called a \emph{metabolizer}.

Furthermore, we say that $L_0$ and $L_1$
are \emph{Witt equivalent over the real numbers}
if there exists a vector subspace $M_{\R}$
of $(G_0 \otimes \R) \oplus (G_1 \otimes \R)$
such that $(L_0^\R \oplus (-L_1^\R))(x, y)
= 0$ for all $x, y \in M_{\R}$ and
$2\dim_\R M_\R = \dim_\R (G_0 \otimes \R)
+ \dim_\R (G_1 \otimes \R)$, where
$L_i^\R : (G_i \otimes \R) \times (G_i \otimes \R)
\to \R$ is the real bilinear form associated with $L_i$,
$i = 0, 1$.
\end{definition}

The following lemma is well-known
(for example, see \cite{BM}).

\begin{lemma}
If two simple fibered $(2n-1)$-knots are
cobordant, then their Seifert forms
are Witt equivalent. In particular,
they are Witt equivalent over the real numbers
as well.
\end{lemma}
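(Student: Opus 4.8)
The plan is to recall the classical Seifert-surface argument from knot cobordism theory and adapt it to the fibered setting, since the lemma is essentially a dimension-count on a half-dimensional subspace of the homology of the cobordism. Let $K_0$ and $K_1$ be the two simple fibered $(2n-1)$-knots, with fibers $F_0$ and $F_1$ and Seifert forms $L_i = L_{F_i}$ on $H_n(F_i)$. First I would take a cobordism $X \subset S^{2n+1} \times [0,1]$ between $K_0$ and $K_1$ as in Definition~\ref{dfn:cob}, and glue the fibers $F_0 \subset S^{2n+1} \times \{0\}$ and $-F_1 \subset S^{2n+1} \times \{1\}$ to $X$ along its boundary, pushing everything into the interior of $S^{2n+1} \times [0,1]$ so as to obtain a closed oriented $2n$-dimensional manifold, or rather a compact oriented $2n$-manifold $V = F_0 \cup_{K_0} X \cup_{K_1} (-F_1)$ properly embedded (after a small perturbation) in $S^{2n+1} \times [0,1]$ with $V$ having empty boundary on the interior; its class then lives in $H_{2n}(S^{2n+1}\times[0,1], \partial)$ and bounds a $(2n+1)$-chain $W$ by a standard Alexander-duality / linking argument, since the ambient space has the homology of a sphere relative to its two boundary components.

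The key algebraic step is then to define the metabolizer $M = \Ker\bigl(H_n(F_0) \oplus H_n(F_1) \to H_n(V)\bigr)$, induced by the inclusions $F_0 \hookrightarrow V$ and $F_1 \hookrightarrow V$ (the sign discrepancy from using $-F_1$ is exactly what makes the orthogonal form $L_0 \oplus (-L_1)$ the relevant one). I would then carry out the two verifications separately. For the isotropy condition, given $x, y \in M$ represented by $n$-cycles that bound $(n+1)$-chains in $V$, the linking number computing $(L_0 \oplus (-L_1))(x,y)$ is expressed via intersection numbers of these bounding chains with the pushed-off cycle inside $S^{2n+1} \times [0,1]$; using that $V$ is two-sided and that $x,y$ bound in $V$, these intersection numbers vanish. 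For the rank condition, I would use the half-lives-half-dies principle: for the compact oriented $2n$-manifold $V$ embedded in the $(2n+2)$-manifold $S^{2n+1}\times[0,1]$ (which bounds, relative to its boundary), the kernel of $H_n(V;\Q) \to H_n(\text{exterior};\Q)$ has exactly half the dimension of $H_n(V;\Q)$, and one relates $H_n(V;\Q)$ to $H_n(F_0;\Q) \oplus H_n(F_1;\Q)$ using that $X \simeq K_0 \times [0,1]$ is homotopy equivalent to $K_0$, which is $(n-2)$-connected, so the Mayer–Vietoris sequence for $V = F_0 \cup X \cup (-F_1)$ shows $H_n(V;\Q) \cong H_n(F_0;\Q)\oplus H_n(F_1;\Q)$ modulo contributions from $H_n(K_0;\Q)$ that are controlled by connectivity and by the fact that $K_i$ bounds $F_i$.

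The main obstacle I expect is the rank count, precisely this bookkeeping in the Mayer–Vietoris sequence: one must check that the connecting maps involving $H_*(K_0)$ do not spoil the equality $\dim_\Q H_n(V;\Q) = \dim_\Q H_n(F_0;\Q) + \dim_\Q H_n(F_1;\Q)$, and then that ``half lives, half dies'' applies in the relative/ambient setting at hand (the exterior of $V$ in $S^{2n+1}\times[0,1]$ need not be a rational homology ball, so one works with the long exact sequence of the pair and Poincaré–Lefschetz duality rather than citing the cleanest form of the statement). Once the rank of $M$ is pinned down to be half of $\dim_\Q(H_n(F_0;\Q)\oplus H_n(F_1;\Q))$, the statement over $\R$ follows immediately by tensoring $M$ with $\R$, which gives the required isotropic subspace $M_\R$ of the correct dimension for $L_0^\R \oplus (-L_1^\R)$; and for the integral statement one notes that $M$, being a kernel of a map to a free-abelian quotient after killing torsion, can be arranged to be a direct summand. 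Since the lemma is stated as well-known with a reference to \cite{BM}, I would keep the exposition brief and cite the standard sources for the half-dimensional isotropic subspace theorem.
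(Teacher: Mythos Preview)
The paper does not prove this lemma at all; it is stated as well-known with a reference to \cite{BM}, so there is no proof to compare against. Your sketch is the classical Seifert-surface argument that underlies the cited result, and the overall strategy is sound.

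There is, however, an internal inconsistency in your proposal that you should fix. You define the metabolizer as
\[
M = \Ker\bigl(H_n(F_0)\oplus H_n(F_1) \to H_n(V)\bigr),
\]
but later, in the rank discussion, you invoke the kernel of $H_n(V;\mathbf{Q}) \to H_n(\text{exterior};\mathbf{Q})$ and argue via Mayer--Vietoris that $H_n(V;\mathbf{Q}) \cong H_n(F_0;\mathbf{Q})\oplus H_n(F_1;\mathbf{Q})$. If that isomorphism holds, your original $M$ is zero, not half-rank. The correct metabolizer is the \emph{second} kernel (the push-off map to the exterior $Y$ of $V$ in $S^{2n+1}\times[0,1]$), transported back to $H_n(F_0)\oplus H_n(F_1)$ via the Mayer--Vietoris identification. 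With that definition your isotropy argument still goes through (a cycle bounding in $Y$ has zero linking number with anything in $V$, hence with its push-off), and the half-rank statement is exactly the duality/``half-lives-half-dies'' count you describe. Since the paper only expects a citation here, a brief pointer to \cite{BM} (or, for the spherical case, to \cite{L1,L2}) would suffice.
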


Now, let $f$ be a 
\emph{weighted homogeneous polynomial} in $\C^{n+1}$,
i.e., there exist positive rational numbers
$(w_1, w_2, \ldots, w_{n+1})$, called
\emph{weights}, such that
for each monomial $c z_1^{k_1} z_2^{k_2} \cdots z_{n+1}^{k_{n+1}}$,
$c \neq 0$, of $f$, we have
$$\sum_{j=1}^{n+1} \frac{k_j}{w_j} = 1.$$
We say that $f$ is \emph{nondegenerate} if
it has an isolated critical point at the origin.
Saito \cite{Saito} has shown that if $f$ is nondegenerate, then
by an analytic change of coordinate, 
$f$ can be transformed to a nondegenerate
weighted homogeneous polynomial such that all
the weights are greater than or equal to $2$. 
Furthermore, under the assumption that the
weights are all greater than or equal to $2$,
the weights are analytic invariants of the polynomial.

Let $f$ be a nondegenerate weighted homogeneous
polynomial in $\C^{n+1}$ with weights
$(w_1, w_2, \ldots, w_{n+1})$ such that $w_j \geq 2$ for
all $j$. Set
$$P_f(t) = \prod_{j=1}^{n+1} \frac{t-t^{1/w_j}}{t^{1/w_j}-1}.$$
Note that $P_f(t)$ is a polynomial in $t^{1/m}$ 
over $\Z$ for
some positive integer $m$. It is known
that two nondegenerate weighted homogeneous
polynomials $f$ and $g$ in $\C^{n+1}$
have the same weights if and only if
$P_f(t) = P_g(t)$ (see \cite{Steenbrink}).

Our first result is the following.

\begin{theorem}\label{thm1}
Let $f$ and $g$ be nondegenerate
weighted homogeneous polynomials in $\C^{n+1}$.
Then, their Seifert forms are Witt equivalent
over the real numbers if and only if
$P_f(t) \equiv P_g(t) \mod{t+1}$.
\end{theorem}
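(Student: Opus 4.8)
The plan is to reduce the statement to a computation of the Witt class of the real symmetric (and skew-symmetric) forms built from the Seifert form, and to express that Witt class in terms of the characteristic polynomial of the monodromy, which is essentially $P_f(t)$. The starting point is the classical fact that for a simple fibered $(2n-1)$-knot the Seifert form $L_f$ is unimodular and the monodromy is $h = (-1)^{n+1}(L_f)^{-1}\,{}^tL_f$ (up to the usual sign conventions), so the characteristic polynomial $\Delta_f(t) = \det(tI - h)$ of the monodromy is the Alexander polynomial, and it is well known that $\Delta_f(t)$ is determined by the weights via $\Delta_f(t) \doteq P_f(t)$ (this is precisely the content of the cited formula from \cite{Steenbrink}, since $P_f$ is built from the weights in the same way the characteristic polynomial of the monodromy of a weighted homogeneous singularity is). Thus I would first record: two nondegenerate weighted homogeneous $f,g$ in $\C^{n+1}$ have $P_f \equiv P_g \bmod (t+1)$ if and only if $\Delta_f(-1) \doteq \Delta_g(-1)$, i.e. the value of the Alexander polynomial at $-1$ agrees up to sign and up to the ambiguity coming from the exponent of $(t+1)$; more precisely, reducing mod $t+1$ retains exactly the multiplicity of the root $-1$ together with the unit $\Delta_f(-1)/(t+1)^{e}$ evaluated suitably. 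The cleanest formulation: $P_f(t)\equiv P_g(t)\bmod(t+1)$ iff the $(t+1)$-primary information of the two Alexander polynomials coincides.

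Next I would bring in the structure theory of the Witt group of the relevant forms. Over $\R$, the Witt class of the symmetrized Seifert form $L_f + (-1)^n\,{}^tL_f$ (a symmetric form when $n$ is even, a skew-symmetric one when $n$ is odd) splits according to the eigenvalues of the monodromy: eigenvalues off the real axis and the eigenvalue $+1$ contribute metabolic (hyperbolic) pieces and hence do not affect the real Witt class, while the only surviving contribution comes from the eigenvalue $-1$. This is the standard ``only $\lambda = -1$ matters'' phenomenon for isometric structures / the algebraic cobordism of Seifert forms; in the terminology of \cite{BM}, two simple fibered knots have Witt equivalent Seifert forms over $\R$ precisely when the ``$(t+1)$-part'' of their isometric structures agrees, and over $\R$ that part is detected by a single signature-type invariant together with the parity of the size of the generalized $(-1)$-eigenspace. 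Concretely, I would invoke the decomposition of the Milnor lattice with its variation structure into pieces indexed by irreducible factors of $\Delta_f$, observe that all factors other than $t+1$ give forms that are metabolic over $\R$ after symmetrization (because such a factor $p(t)$ and its ``conjugate'' $p^\#(t)$ are distinct and span a hyperbolic summand, or because $\R[t]/(p(t))$ carries no nontrivial $\R$-Witt class in the relevant dimension), and conclude that the real Witt class of $L_f$ is determined by the $(t+1)$-primary summand alone.

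Then I would identify the $(t+1)$-primary summand with data visible in $P_f(t)\bmod(t+1)$: its rank equals the multiplicity of $-1$ as a root of $\Delta_f$, i.e. the order of vanishing of $P_f$ at $t=-1$, which is exactly the degree drop recorded in $P_f \bmod (t+1)$; and the residual sign/unit is exactly the nonzero constant in $P_f(t)\bmod(t+1)$ when that multiplicity is zero. So $P_f \equiv P_g \bmod (t+1)$ forces the $(t+1)$-primary summands of $f$ and $g$ to have the same rank and the same residual invariant, hence Witt equivalent real forms, hence (taking the direct sum $L_f \oplus (-L_g)$ and using that the off-$(-1)$ parts are automatically metabolic) a metabolizer over $\R$ of half dimension — this is the ``if'' direction. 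For the ``only if'' direction, Witt equivalence over $\R$ of $L_f$ and $L_g$ forces equality of the real Witt invariants of the symmetrized forms, hence equality of the $(t+1)$-primary ranks and residual units, which is exactly $P_f \equiv P_g \bmod (t+1)$.

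The main obstacle I anticipate is the bookkeeping in the eigenvalue-decomposition step: making precise that, over $\R$, \emph{every} summand of the symmetrized Seifert form coming from an eigenvalue $\lambda \neq -1$ (including complex-conjugate pairs and the eigenvalue $+1$) is metabolic, so that it contributes nothing to the real Witt class and, crucially, can be absorbed into the metabolizer of $L_f\oplus(-L_g)$ without interfering with the $(t+1)$-part. One must be careful with the two parities of $n$ (symmetric versus skew-symmetric symmetrization) and with the distinction between the unimodular Seifert form and its symmetrization; the skew-symmetric case is actually easier since skew forms over $\R$ are always metabolic, while the symmetric case needs the signature computation. I would handle this by citing the standard classification of $\R$-isometric structures (or the Milnor pairing decomposition) rather than redoing it, and by checking the $P_f \bmod (t+1)$ translation on the explicit product formula for $P_f$, where each factor $(t - t^{1/w_j})/(t^{1/w_j}-1)$ contributes to the order at $t=-1$ exactly when $w_j$ is even, so that the multiplicity of $-1$ in $\Delta_f$ is the number of even weights (minus one, with the usual normalization) — a clean combinatorial quantity that makes the comparison with $g$ transparent.
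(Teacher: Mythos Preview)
Your proposal rests on two misidentifications that make the argument break down.

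First, $P_f(t)$ is \emph{not} the Alexander polynomial $\Delta_f(t)$. It is the spectrum polynomial, a polynomial in $t^{1/m}$ with fractional exponents; in the paper's notation $P_f(t)=\sum_\alpha c_\alpha t^\alpha$ with $\alpha\in\frac{1}{m}\Z$. Writing $s=t^{1/m}$ and $Q_f(s)=P_f(t)$, the congruence $P_f\equiv P_g\bmod(t+1)$ becomes $Q_f\equiv Q_g\bmod(s^m+1)$, i.e.\ $Q_f(\xi)=Q_g(\xi)$ for every $\xi$ with $\xi^m=-1$. That is $m$ independent conditions, not just the single datum ``multiplicity of $-1$ in $\Delta_f$''. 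The characteristic polynomial $\Delta_f$ is obtained from $P_f$ by collapsing all exponents $\alpha$ with the same $e^{-2\pi i\alpha}$, so $P_f\bmod(t+1)$ retains strictly more information than the $(t+1)$-primary part of $\Delta_f$.

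Second, and more seriously, it is \emph{false} that only the eigenvalue $-1$ of the monodromy contributes to the real Witt class of the Seifert form. A metabolizer for $L_f\oplus(-L_g)$ is a subspace on which the (generally nonsymmetric) bilinear form vanishes identically; this forces equality of the equivariant signatures $\sigma_\lambda$ at \emph{every} eigenvalue $\lambda$ on the unit circle, not just at $\lambda=-1$. Your claim that complex-conjugate pairs $\lambda,\bar\lambda$ give metabolic summands would be correct for a plain symmetric form over $\R$ with an isometry, but the Seifert-form Witt relation is finer: the $\lambda$-summand of the symmetrized form carries a nonzero signature $\sigma_\lambda(f)$ in general, and these are precisely the Levine--Tristram type invariants that obstruct real Witt equivalence. (For instance, two Brieskorn polynomials whose monodromies have no eigenvalue $-1$ at all can still fail to be real-Witt equivalent because their $\sigma_\lambda$'s differ at some $\lambda\neq\pm1$; your argument would declare them equivalent.)

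The paper's proof goes through the equivariant signatures directly: Steenbrink's formula expresses each $\sigma_\lambda(f)$ as an alternating sum of the $c_\alpha$ over those $\alpha$ with $e^{-2\pi i\alpha}=\lambda$, the sign depending on the parity of $\lfloor\alpha\rfloor$. Splitting $P_f=P_f^0+P_f^1$ by that parity, equality of all $\sigma_\lambda$ translates into a pair of congruences modulo $t^2-1$ which combine to give exactly $(t-1)P_f\equiv(t-1)P_g\bmod(t^2-1)$, i.e.\ $P_f\equiv P_g\bmod(t+1)$. The fractional exponents are what allow a single congruence modulo $t+1$ to encode the entire list of signatures. Your reduction to the $(t+1)$-primary part of $\Delta_f$ discards this, so the proposed argument cannot be repaired without reintroducing the full spectrum and the Steenbrink formula.
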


\begin{remark}
The above theorem should be compared with
the result, obtained in \cite{Saeki2000}, which states that
the Seifert forms associated with nondegenerate
weighted homogeneous polynomials $f$ and $g$
are isomorphic over the real numbers if
and only if 
$P_f(t) \equiv P_g(t) \mod{t^2-1}$.
\end{remark}

Let us now consider the case of Brieskorn polynomials.
Note that a Brieskorn polynomial is always a nondegenerate
weighted homogeneous polynomial and its weights
coincide with its exponents.

\begin{proposition}\label{prop:cot}
Let
$$
f(z) = \sum_{j=1}^{n+1} z_j^{a_j} \quad \text{and} \quad
g(z) = \sum_{j=1}^{n+1} z_j^{b_j}
$$
be Brieskorn polynomials. 
Then, their Seifert forms are Witt equivalent
over the real numbers if and only if
\begin{equation}
\prod_{j=1}^{n+1} \cot \frac{\pi \ell}{2a_j}
= \prod_{j=1}^{n+1} \cot \frac{\pi \ell}{2b_j}
\label{eq:cot}
\end{equation}
holds for all odd integer $\ell$.
\end{proposition}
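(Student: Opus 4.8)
The plan is to feed Theorem~\ref{thm1} into a direct evaluation of the polynomial $P_f$. By Theorem~\ref{thm1}, the Seifert forms of $f$ and $g$ are Witt equivalent over $\R$ if and only if $P_f(t)\equiv P_g(t)\pmod{t+1}$, and since a Brieskorn polynomial has weights equal to its exponents this reads
$$\prod_{j=1}^{n+1}\frac{t-t^{1/a_j}}{t^{1/a_j}-1}\equiv\prod_{j=1}^{n+1}\frac{t-t^{1/b_j}}{t^{1/b_j}-1}\pmod{t+1}.$$
First I would fix a common multiple $m$ of $a_1,\dots,a_{n+1},b_1,\dots,b_{n+1}$ and put $s=t^{1/m}$; then $P_f$ and $P_g$ become elements of $\Z[s]$ (each is a polynomial over $\Z$ in $t$ raised to one over a common multiple of its exponents), and $t+1$ becomes $s^m+1$. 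The congruence then says that the monic polynomial $s^m+1$ divides $P_f(s)-P_g(s)$ in $\Z[s]$, which, $s^m+1$ being separable, is equivalent to $P_f(\zeta)=P_g(\zeta)$ at every root $\zeta$ of $s^m+1$, that is, at $\zeta=\zeta_\ell:=e^{i\pi\ell/m}$ for $\ell=1,3,\dots,2m-1$.

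The computational heart is the evaluation of $P_f$ at $\zeta_\ell$. Since $\ell$ is odd we have $\zeta_\ell^{m}=e^{i\pi\ell}=-1$ and $\zeta_\ell^{m/a_j}=e^{i\pi\ell/a_j}$, and no $\ell/a_j$ is an even integer, so each denominator $\zeta_\ell^{m/a_j}-1$ is nonzero and $P_f(\zeta_\ell)$ equals the product of the individual factors. Using $1+e^{i\theta}=2e^{i\theta/2}\cos(\theta/2)$ and $e^{i\theta}-1=2ie^{i\theta/2}\sin(\theta/2)$ with $\theta=\pi\ell/a_j$, a one-line simplification gives
$$\frac{\zeta_\ell^{m}-\zeta_\ell^{m/a_j}}{\zeta_\ell^{m/a_j}-1}=\frac{-1-e^{i\pi\ell/a_j}}{e^{i\pi\ell/a_j}-1}=i\cot\frac{\pi\ell}{2a_j},$$
hence $P_f(\zeta_\ell)=i^{n+1}\prod_{j=1}^{n+1}\cot\frac{\pi\ell}{2a_j}$, and similarly $P_g(\zeta_\ell)=i^{n+1}\prod_{j=1}^{n+1}\cot\frac{\pi\ell}{2b_j}$. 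Therefore $P_f(\zeta_\ell)=P_g(\zeta_\ell)$ is precisely \eqref{eq:cot} for that value of $\ell$.

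It remains to pass from $\ell\in\{1,3,\dots,2m-1\}$ to all odd integers $\ell$. Both sides of \eqref{eq:cot} are defined and finite for every odd $\ell$, because $2a_j\nmid\ell$ and $2b_j\nmid\ell$; moreover, replacing $\ell$ by $\ell+2m$ adds $\pi m/a_j\in\pi\Z$ to each argument, so by the $\pi$-periodicity of $\cot$ both sides are invariant under $\ell\mapsto\ell+2m$. Hence \eqref{eq:cot} holds for all odd $\ell$ if and only if it holds for $\ell=1,3,\dots,2m-1$, and chaining the equivalences above yields the proposition. I expect the only point needing care to be the setup of this reduction: reading the congruence modulo $t+1$ as divisibility by $s^m+1$ in $\Z[s]$ for a common denominator $m$, and identifying the relevant roots; once that is in place, the half-angle computation and the periodicity argument are entirely routine.
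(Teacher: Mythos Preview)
Your proof is correct and follows essentially the same route as the paper's: apply Theorem~\ref{thm1}, pass to the variable $s=t^{1/m}$ for a common multiple $m$ of the exponents, and observe that the congruence modulo $t+1=s^m+1$ amounts to equality of $P_f$ and $P_g$ at all $m$-th roots of $-1$, which the half-angle identity converts into the cotangent product. You spell out two points the paper leaves implicit---the separability of $s^m+1$ justifying the root-by-root check, and the $2m$-periodicity in $\ell$ extending the finitely many congruence conditions to all odd $\ell$---but these are refinements of the same argument rather than a different approach.
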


To each polynomial $Q(t) = \prod_{j=1}^k (t-\alpha_j)$, 
with $\alpha_1, \alpha_2, \ldots, \alpha_k$ in $\C^\ast$, 
the multiplicative group of nonzero complex numbers, set
$$\mathrm{divisor}\,Q(t) = \langle \alpha_1 \rangle + 
\langle \alpha_2 \rangle + \cdots + \langle \alpha_k \rangle,$$
which is regarded as an element of the integral group ring 
$\Z\C^\ast$ and is called the \emph{divisor} of $Q$.
For a positive integer $a$,
set $\Lambda_a = \mathrm{divisor}\,(t^a-1)$. 
For the notation and some properties
of $\Lambda_a$, we refer the reader to \cite{MO}.

Let $f$ be a nondegenerate weighted homogeneous
polynomial in $\C^{n+1}$ with weights
$(w_1, w_2, \ldots, w_{n+1})$ such that
$w_j \geq 2$ for all $j$.
Let $\Delta_f(t)$ be the characteristic
polynomial of the monodromy of $f$ 
(see \cite{Milnor}).
Then, by Milnor-Orlik \cite{MO}, we have
\begin{equation}\label{Milnor-Orlik}
\mathrm{divisor}\,\Delta_f(t) = \prod_{j=1}^{n+1}
\left(\frac{1}{v_j}\Lambda_{u_j}-1\right),
\end{equation}
where $w_j = u_j/v_j$, and $u_j$ and $v_j$
are relatively prime positive integers,
$j = 1, 2, \ldots, n+1$.
In the case of a Brieskorn polynomial, by virtue of
the Brieskorn-Pham theorem (for example,
see \cite{Milnor}), we have 
$$
\mathrm{divisor}\,\Delta_f(t) = \prod_{j=1}^{n+1}
(\Lambda_{a_j}-1),
$$
which can also be deduced from the Milnor-Orlik
theorem mentioned above.

\begin{proposition}\label{prop:FM}
$(1)$ Let $f$ and $g$ be nondegenerate weighted
homogeneous polynomials in $\C^{n+1}$
with weights 
$$(u_1/v_1, u_2/v_2, \ldots, u_{n+1}/v_{n+1})
\quad \mbox{and} \quad
(u'_1/v'_1, u'_2/v'_2, \ldots, u'_{n+1}/v'_{n+1})$$
respectively, where $u_j$ and $v_j$ \textup{(}resp.\ $u'_j$
and $v'_j$\textup{)} are relatively prime positive
integers, $j = 1, 2, \ldots, n+1$.
If their Seifert forms are Witt equivalent
over the real numbers, then we have
$$\prod_{j=1}^{n+1}
\left(\frac{1}{v_j}\Lambda_{u_j}-1\right)
\equiv \prod_{j=1}^{n+1}
\left(\frac{1}{v'_j}\Lambda_{u'_j}-1\right)
\pmod{2}.$$

$(2)$ Let $f$ and $g$ be Brieskorn polynomials
as in Proposition~\textup{\ref{prop:cot}}.
If their Seifert forms are Witt equivalent
over the real numbers, then we have
$$\prod_{j=1}^{n+1}(\Lambda_{a_j}-1) \equiv 
\prod_{j=1}^{n+1}(\Lambda_{b_j}-1) \pmod{2}.$$
\end{proposition}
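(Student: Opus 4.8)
The plan is to deduce both parts from Theorem~\ref{thm1} by viewing $\mathrm{divisor}\,\Delta_f$ as a reduction of the polynomial $P_f$. First, part~(2) is the special case of part~(1) in which all weights are integers (then $v_j=v'_j=1$, $u_j=a_j$, $u'_j=b_j$, and $(1/v_j)\Lambda_{u_j}-1=\Lambda_{a_j}-1$), so it suffices to prove (1). By Theorem~\ref{thm1} the hypothesis is equivalent to $P_f(t)\equiv P_g(t)\pmod{t+1}$. Fix a positive integer $m$ with $P_f,P_g\in\Z[t^{1/m}]$ and put $u=t^{1/m}$, so that $t+1=u^m+1$; the congruence then reads $P_f-P_g=(u^m+1)Q(u)$ for some $Q(u)\in\Z[u]$. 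The goal is to show that this divisibility forces $\prod_{j=1}^{n+1}\bigl((1/v_j)\Lambda_{u_j}-1\bigr)\equiv\prod_{j=1}^{n+1}\bigl((1/v'_j)\Lambda_{u'_j}-1\bigr)\pmod 2$, the two sides being $\mathrm{divisor}\,\Delta_f$ and $\mathrm{divisor}\,\Delta_g$ by \eqref{Milnor-Orlik}.

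The bridge is the ring homomorphism $\psi_m\colon\Z[t^{1/m}]\to\Z\C^\ast$ determined by $t^{1/m}\mapsto\langle\zeta\rangle$, where $\zeta=\exp(2\pi\sqrt{-1}/m)$; I will use that $\psi_m(P_f)=\mathrm{divisor}\,\Delta_f$, and similarly for $g$ (justified in the last paragraph). Granting this, the argument reduces to the elementary remark that \emph{$t+1$ and $t-1$ are congruent modulo $2$}. Indeed, reducing $P_f-P_g=(u^m+1)Q(u)$ modulo $2$ and using $u^m+1=u^m-1$ in $(\Z/2)[u]$ shows that $P_f$ and $P_g$ have the same image in $(\Z/2)[u]/(u^m-1)$. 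Since $\psi_m(u^m-1)=\langle\zeta\rangle^m-\langle 1\rangle=0$, the composite $\Z[u]\xrightarrow{\psi_m}\Z\C^\ast\to(\Z/2)\C^\ast$ factors through $(\Z/2)[u]/(u^m-1)$, and applying the induced homomorphism to $P_f\equiv P_g$ gives $\mathrm{divisor}\,\Delta_f\equiv\mathrm{divisor}\,\Delta_g\pmod 2$. This proves (1), hence also (2).

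The step that needs genuine care is the identity $\psi_m(P_f)=\mathrm{divisor}\,\Delta_f$. One writes $P_f(t)=t^{\sum_{j}1/w_j}\cdot\prod_{j=1}^{n+1}\frac{1-t^{1-1/w_j}}{1-t^{1/w_j}}$, in which the second factor is the Poincaré series of the Milnor algebra $A_f=\C[z]/(\partial f/\partial z_1,\dots,\partial f/\partial z_{n+1})$ and is a polynomial in $t^{1/m}$ because the partial derivatives form a regular sequence; hence $P_f(t)=\sum_b t^{\deg_w b+\sum_j 1/w_j}$, where $b$ runs over a monomial basis of $A_f$ and $\deg_w$ denotes the weighted degree. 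By the Milnor--Orlik formula \eqref{Milnor-Orlik} (in the Brieskorn case, directly by Brieskorn--Pham), the monodromy eigenvalues of $f$, counted with multiplicity, are exactly $\exp\bigl(2\pi\sqrt{-1}(\deg_w b+\sum_j 1/w_j)\bigr)$ for those same $b$; so $\psi_m$ carries the monomial of $P_f$ indexed by $b$ to the corresponding summand $\bigl\langle\exp(2\pi\sqrt{-1}(\deg_w b+\sum_j 1/w_j))\bigr\rangle$ of $\mathrm{divisor}\,\Delta_f$, which is the asserted identity. The main obstacle is thus this identification — matching the exponents of $P_f$ with the arguments of the monodromy eigenvalues, and checking that a single $m$ works simultaneously for $f$ and $g$; once that is in place, the collapse of $t+1$ onto $t-1$ modulo $2$ does all the remaining work.
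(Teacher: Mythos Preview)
Your argument is correct and follows essentially the same route as the paper: both start from Theorem~\ref{thm1}, use that $t+1\equiv t-1\pmod 2$ to pass from $P_f\equiv P_g\pmod{t+1}$ to a congruence modulo $(t-1,2)$, and then invoke the fact that reducing $P_f$ modulo $t-1$ recovers $\mathrm{divisor}\,\Delta_f$ via the Milnor--Orlik formula. The only difference is presentational: you make the last step explicit through the ring homomorphism $\psi_m$ and the Poincar\'e series of the Jacobian algebra, whereas the paper simply cites \cite{N, Saeki2000} for the identification of the mod--$(t-1)$ reduction of $P_f$ with the eigenvalue multiplicities.
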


The following theorem partially answers \cite[Problem~11.10]{BS}
in the positive.

\begin{theorem}\label{thm2}
Suppose that for each of the Brieskorn polynomials 
$$
f(z) = \sum_{j=1}^{n+1} z_j^{a_j} \quad \text{and} \quad
g(z) = \sum_{j=1}^{n+1} z_j^{b_j}
$$
no exponent is a multiple of another one. Then,
the knots $K_f$ and $K_g$ are cobordant if and only if 
$a_j = b_j$, $j = 1, 2, \ldots, n+1$, up to order.
\end{theorem}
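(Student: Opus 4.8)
The plan is as follows. The ``if'' direction is immediate: if the two multisets of exponents agree, then $g$ is obtained from $f$ by a permutation of the coordinates of $\C^{n+1}$, which is a complex-linear and hence orientation-preserving automorphism, so $K_f$ and $K_g$ are isotopic and in particular cobordant. For the converse, suppose $K_f$ and $K_g$ are cobordant. Since algebraic knots are simple fibered knots, cobordant ones have Seifert forms that are Witt equivalent over $\R$; hence Proposition~\ref{prop:cot} gives that \eqref{eq:cot} holds for every odd integer $\ell$, and Proposition~\ref{prop:FM}(2) gives $\prod_{j=1}^{n+1}(\Lambda_{a_j}-1)\equiv\prod_{j=1}^{n+1}(\Lambda_{b_j}-1)\pmod 2$. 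I claim these two relations force the multisets $\{a_1,\dots,a_{n+1}\}$ and $\{b_1,\dots,b_{n+1}\}$ to coincide. First note that the hypothesis already forces the exponents within each polynomial to be pairwise distinct, since two equal exponents would each be a multiple of the other; so it suffices to prove that the sets $A=\{a_1,\dots,a_{n+1}\}$ and $B=\{b_1,\dots,b_{n+1}\}$ are equal, and they have the same cardinality $n+1$.

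Next I would treat the odd exponents. For a fixed odd $\ell$ and any exponent $a$, the factor $\cot(\pi\ell/2a)$ is finite, and it vanishes precisely when $\ell\equiv a\pmod{2a}$, which can occur only if $a$ is odd; thus the left-hand side of \eqref{eq:cot} vanishes exactly on the set $Z_A$ of odd integers divisible by some odd element of $A$, and similarly for $B$, so \eqref{eq:cot} yields $Z_A=Z_B$. Using the hypothesis once more, an odd element $a$ of $A$ has no proper divisor lying in $Z_A$ (such a divisor would be divisible by some odd $a'\in A$ with $a'\mid a$ and $a'\neq a$, contradicting the hypothesis), so the odd elements of $A$ are exactly the divisibility-minimal elements of $Z_A$, and likewise for $B$. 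Hence $A$ and $B$ have the same odd part $A_{\mathrm{odd}}=B_{\mathrm{odd}}$, and consequently $A_{\mathrm{even}}=A\setminus A_{\mathrm{odd}}$ and $B_{\mathrm{even}}$ have the same cardinality.

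It remains to prove that $A$ and $B$ have the same even elements. Cancelling in \eqref{eq:cot} the common odd factors — which are nonzero and finite on the set $W$ of odd integers not divisible by any odd exponent — and then cancelling the even exponents common to $A$ and $B$, one reduces to the following: if $A'$ and $B'$ are disjoint finite sets of even integers $\ge2$ in which no element is a multiple of another, and $\prod_{a\in A'}\cot(\pi\ell/2a)=\prod_{b\in B'}\cot(\pi\ell/2b)$ for all $\ell\in W$, then $A'=B'=\varnothing$. I expect this to be the main obstacle, because the zero-set argument fails here: for an even exponent $a$ the function $\ell\mapsto\cot(\pi\ell/2a)$ has no zeros (and no poles) on the odd integers, so one must extract the finer information in \eqref{eq:cot}. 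One route is to put $t=e^{i\pi\ell/L}$ with $L$ a common multiple of all the exponents, rewrite $\cot(\pi\ell/2a)=i\,(t^{L/a}+1)/(t^{L/a}-1)$, and regard the displayed identity as an equality of rational functions of $t$ holding on the roots of unity with $t^{L}=-1$ (up to finitely many exceptions coming from $Z_A$); if one can promote it to an identity of rational functions, then comparing divisors in $\Z\C^\ast$ — where, by the independence of the $\Lambda_M$, the coefficient $-2$ occurs exactly at the points $L/a$ with $a\in A'$ — forces $A'=B'$, hence $A'=B'=\varnothing$. The delicate point is that a priori the number of available evaluation points is too small relative to the degrees, so this must be supplemented by an induction peeling off the largest even exponent $c^\ast$: choosing $\ell$ by the Chinese Remainder Theorem so that the factor $\cot(\pi\ell/2c^\ast)$ is forced to its minimal nonzero value $\tan(\pi/2c^\ast)$ while the remaining factors stay nonzero and some factor on the opposing side is made to vary, one reaches a contradiction. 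In the sub-case where there are no odd exponents the argument is cleaner: writing $\Lambda_a$ as a sum of sums $\phi_d$ of primitive $d$-th roots of unity over $d\mid a$ and using their independence, Proposition~\ref{prop:FM}(2) reduces to the parity statement $\#\{a\in A:m\mid a\}\equiv\#\{b\in B:m\mid b\}\pmod2$ for all $m\ge2$, from which $A=B$ follows by taking $m$ maximal in the symmetric difference $A\triangle B$ and invoking the no-multiple hypothesis.

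Finally, combining the equality of the odd parts and of the even parts with $|A|=|B|=n+1$ and the distinctness of the exponents, the multisets coincide, i.e.\ $a_j=b_j$ for $j=1,\dots,n+1$ up to order, which completes the proof.
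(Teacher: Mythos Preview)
Your handling of the ``if'' direction and of the odd exponents is fine, but the even-exponent step is a genuine gap: you yourself flag that the cotangent identity gives no zero-set information for even $a$, and neither the rational-function promotion nor the CRT-minimization sketch is actually carried through (the evaluation points really are too few to force an identity of rational functions, and ``force one factor to its minimal value while another varies'' is never turned into a usable inequality). So the proof is incomplete precisely in the mixed-parity case that your case split creates.

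The paper bypasses \eqref{eq:cot} entirely and uses only Proposition~\ref{prop:FM}(2), which treats all parities uniformly. Order the exponents as $a_1<\cdots<a_{n+1}$ and $b_1<\cdots<b_{n+1}$, and expand $\prod_j(\Lambda_{a_j}-1)$ using $\Lambda_a\Lambda_b=(a,b)\Lambda_{[a,b]}$: every nonconstant term is an integer multiple of some $\Lambda_{[a_{j_1},\dots,a_{j_k}]}$, and the unique one with smallest subscript is $(-1)^n\Lambda_{a_1}$, with odd coefficient. Comparing smallest subscripts on the two sides of the mod~$2$ congruence forces $a_1=b_1$. Subtract $\Lambda_{a_1}$ from both sides; the smallest surviving subscript on the $f$ side is now $a_2$, because $[a_1,a_2]>a_2$ (this is exactly where ``$a_2$ is not a multiple of $a_1$'' enters) and every other lcm involves some $a_j$ with $j\ge 3$ and is therefore $\ge a_3>a_2$. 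Hence $a_2=b_2$; iterating, at step $k$ one uses that $a_k$ is not a multiple of any earlier $a_i$ to conclude $a_k=b_k$. Your own ``no odd exponents'' sub-argument is this same idea in the special case where all cross terms $(a_i,a_j)\Lambda_{[a_i,a_j]}$ vanish mod~$2$; the point you missed is that one does not need them to vanish, only to carry subscripts strictly larger than the exponent currently being peeled off.
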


Concerning \cite[Problem~11.9]{BS}, we have the
following. Recall that the multiplicity of a
Brieskorn polynomial coincides with the smallest
exponent.

\begin{proposition}\label{prop:multi}
Suppose that for each of the Brieskorn polynomials
$$
f(z) = \sum_{j=1}^{n+1} z_j^{a_j} \quad \text{and} \quad
g(z) = \sum_{j=1}^{n+1} z_j^{b_j}
$$
the exponents are pairwise distinct. If $K_f$ and $K_g$
are cobordant, then the multiplicities of $f$ and $g$ coincide.
\end{proposition}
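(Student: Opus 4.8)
The starting point is the Lemma together with Proposition~\ref{prop:cot}: since $K_f$ and $K_g$ are cobordant, their Seifert forms are Witt equivalent over the real numbers, so
$$\prod_{j=1}^{n+1}\cot\frac{\pi\ell}{2a_j}=\prod_{j=1}^{n+1}\cot\frac{\pi\ell}{2b_j}\qquad(\ast)$$
holds for every odd integer $\ell$. Reindex so that $a_1=\min_j a_j$ and $b_1=\min_j b_j$, and assume $a_1\le b_1$ without loss of generality; the claim is $a_1=b_1$, so I suppose $a_1<b_1$ and look for a contradiction. Throughout I use the elementary fact that, for odd $\ell$ and an integer $c\ge 2$, the factor $\cot(\pi\ell/2c)$ is always finite (a pole would require $2c\mid\ell$) and vanishes precisely when $\ell\equiv c\pmod{2c}$, which for odd $\ell$ is equivalent to $c$ being odd and $c\mid\ell$.

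When $a_1$ is odd the argument is short. Put $\ell=a_1$ in $(\ast)$: the $j=1$ factor on the left is $\cot(\pi/2)=0$ while all remaining factors on both sides are finite, so the left-hand side vanishes and hence so does the right-hand side. Thus some factor $\cot(\pi a_1/2b_j)$ vanishes, forcing $b_j\mid a_1$; but $b_j\ge b_1>a_1\ge 1$, so $b_j\nmid a_1$, a contradiction. Consequently, if $a_1<b_1$ can occur at all, then $a_1$ must be even.

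For the case $a_1$ even — the heart of the matter — first observe that the common zero set $Z=\{\ell\ \text{odd}:\ \text{the product in }(\ast)=0\}$ equals, by the fact above, the set of odd integers divisible by an odd exponent of $f$, and equally the set of odd integers divisible by an odd exponent of $g$. Hence $f$ has an odd exponent if and only if $g$ does, and in that case $f$ and $g$ share the same smallest odd exponent. This still leaves open $a_1<b_1$ with $a_1$ even, and here no single factor of $(\ast)$ can be made to vanish along an odd progression. The plan is instead to feed into $(\ast)$ odd integers $\ell$ lying close to odd multiples of $a_1$, so that the $j=1$ factor on the left is pinned to its minimal value $\pm\tan(\pi/2a_1)$, while prescribing by the Chinese Remainder Theorem on the remaining moduli $2a_j$ the behaviour of the other left-hand factors, and then compare with the right-hand side; the parity congruence $\prod_j(\Lambda_{a_j}-1)\equiv\prod_j(\Lambda_{b_j}-1)\pmod 2$ of Proposition~\ref{prop:FM}(2) is brought in to restrict which such $\ell$ are admissible and to fix the relevant residues.

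The step I expect to be the genuine obstacle is precisely this last one: in the all‑even situation there is no vanishing factor to exploit, so the smallest exponent has to be extracted from the finer arithmetic of the cotangent values — equivalently, from the residue $P_f(t)\bmod(t+1)$ of Theorem~\ref{thm1} — and arranging the simultaneous control of the remaining $n$ cotangent factors so as to be incompatible with "every $b_j$ exceeds $a_1$" is where the combinatorial work lies, most plausibly by combining several evaluations of $(\ast)$ with Proposition~\ref{prop:FM}(2).
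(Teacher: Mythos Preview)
Your argument is incomplete: you handle the case $a_1$ odd cleanly, but for $a_1$ even you only sketch a plan and explicitly flag it as the ``genuine obstacle'' without carrying it out. That is a real gap, and the route you propose --- controlling many cotangent factors simultaneously via the Chinese Remainder Theorem --- would be considerably more delicate than necessary.

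The paper bypasses the parity split entirely by working directly with Proposition~\ref{prop:FM}(2), which you already have in hand but relegate to a supporting role. Expand
\[
\prod_{j=1}^{n+1}(\Lambda_{a_j}-1)
\;=\;\sum_{\emptyset\neq S\subseteq\{1,\dots,n+1\}}(-1)^{\,n+1-|S|}\,c_S\,\Lambda_{m_S}\;+\;(-1)^{n+1},
\]
where $m_S=\operatorname{lcm}\{a_j:j\in S\}$ and $c_S\in\Z$. Because the exponents are pairwise distinct and $a_1$ is the smallest, the only nonempty subset $S$ with $m_S=a_1$ is $S=\{1\}$; hence the coefficient of $\Lambda_{a_1}$ in this expansion is $(-1)^n$, which is odd. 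The same holds on the $g$ side with $b_1$ in place of $a_1$. Since the elements $\Lambda_d$ for distinct $d$ are linearly independent in the group ring (and remain so modulo~$2$, by looking at the coefficient of a primitive $d$-th root for the largest $d$), the congruence of Proposition~\ref{prop:FM}(2) forces the smallest indices to match: $a_1=b_1$. This is exactly the first step of the proof of Theorem~\ref{thm2}, and it uses only that each polynomial has a \emph{unique} smallest exponent --- guaranteed by pairwise distinctness --- not the stronger non-divisibility hypothesis needed for the later steps. No cotangent analysis is required.
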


\section{Proofs}\label{section3}

In this section, we prove the results stated
in \S\ref{section2}.

\begin{proof}[Proof of Theorem~\textup{\ref{thm1}}] 
It is known that the Seifert form
associated with the polynomial
$$\tilde{f}(z_1, z_2, \ldots, z_{n+2})
= f(z_1, z_2, \ldots, z_{n+1}) + z_{n+2}^2$$
is naturally isomorphic to $(-1)^{n+1}L_f$
(for example, see \cite{Sakamoto}
or \cite[Lemma~2.1]{Saeki2000}).
Furthermore, we have $P_{\tilde{f}}(t) = t^{1/2}P_f(t)$.
Hence,
by considering $f(z) + z_{n+2}^2$ and $g(z) + z_{n+2}^2$
if necessary, we may assume that $n$ is even.



Recall that
$$
H^n(F_f; \C) = \oplus_\lambda H^n(F_f; \C)_\lambda,
$$
where $F_f$ is the Milnor fiber for $f$,
$\lambda$ runs over all
the roots of the characteristic polynomial $\Delta_f(t)$, and
$H^n(F_f; \C)_\lambda$ is the eigenspace of the
monodromy $H^n(F_f; \C) \to H^n(F_f; \C)$ corresponding to the
eigenvalue $\lambda$. It is easy to see that the intersection form
$S_f = L_f + \transp{L_f}$ of $F_f$ on $H^n(F_f; \C)$
decomposes as the orthogonal direct sum of 
$(S_f)|_{H^n(F_f; \C)_\lambda}$.
Let 
$\mu(f)_\lambda^+$ (resp.\ $\mu(f)_\lambda^-$) denote the 
number of positive (resp.\ negative) eigenvalues 
of $(S_f)|_{H^n(F; \C)_\lambda}$. 
The integer
$$
\sigma_\lambda(f) = \mu(f)_\lambda^+ - \mu(f)_\lambda^-,
$$
is called the \emph{equivariant signature} of 
$f$ with respect to $\lambda$ (for details,
see \cite{Neumann, SSS}).
According to Steenbrink \cite{Steenbrink2}, 
putting $P_f(t) = \sum c_\alpha t^\alpha$,
we have
$$
\sigma_\lambda(f) =  
\sum_{\substack{\lambda = \exp(-2\pi i\alpha)
\\[1pt] \lfloor
\alpha \rfloor: \mbox{ \rm \scriptsize even}}}
c_{\alpha}  
- \sum_{\substack{\lambda = \exp(-2\pi i\alpha), 
\\[1pt] \lfloor
\alpha \rfloor: \mbox{ \rm \scriptsize odd}}} c_{\alpha}
$$
for $\lambda \neq 1$,
where $i = \sqrt{-1}$, and $\lfloor \alpha \rfloor$ is the largest integer
not exceeding $\alpha$.

Now, suppose that the Seifert forms $L_f$ and $L_g$
are Witt equivalent over the real numbers.
Then, the equivariant signatures $\sigma_\lambda(f)$ and 
$\sigma_\lambda(g)$ coincide 
for all $\lambda$ 
(for example, see \cite{DH}.
See also \cite{L1, L2} for the spherical knot case).
Note that by \cite[Lemma~2.3]{Saeki2000},
the equivariant signature for $\lambda = 1$
is always equal to zero.

Set $P_f(t) = P_f^0(t) + P_f^1(t)$, where
$P_f^0(t)$ (resp.\ $P_f^1(t)$) is the sum of those
terms $c_\alpha t^\alpha$ with $\lfloor \alpha \rfloor \equiv 0 \pmod{2}$
(resp.\ $\lfloor \alpha \rfloor \equiv 1 \pmod{2}$). 
We define $P_g^0(t)$ and $P_g^1(t)$ similarly.
Since the equivariant signatures of $f$ and $g$
coincide, we have
$$tP_f^0(t) - P_f^1(t) \equiv tP_g^0(t) - P_g^1(t)
\mod{t^2-1}$$
and
$$tP_f^1(t) - P_f^0(t) \equiv tP_g^1(t) - P_g^0(t)
\mod{t^2-1}$$
(for details, see \cite{N, Saeki2000}).
Adding up these two congruences we see that
\begin{equation}
(t-1)P_f(t) \equiv (t-1)P_g(t) \mod{t^2-1},
\label{eq:t2-1}
\end{equation}
which implies that 
\begin{equation}
P_f(t) \equiv P_g(t) \mod{t+1}.
\label{eq:t+1}
\end{equation}

Conversely, suppose that (\ref{eq:t+1}) holds.
Then, we have (\ref{eq:t2-1}), which implies that
the Seifert forms $L_f$ and $L_g$ have the same
equivariant signatures. Then, we see that they
are Witt equivalent over the real numbers
by virtue of \cite[\S4]{Saeki2000}.
This completes the proof.
\end{proof}

Note that $P_f(t)$ and $P_g(t)$ are
polynomials in $s = t^{1/m}$ for some $m$.
Let us put $Q_f(s) = P_f(t)$ and $Q_g(s) = P_g(t)$.
Then, it is easy to see that
(\ref{eq:t+1}) holds if and only if
$Q_f(\xi) = Q_g(\xi)$ for all $\xi$ with
$\xi^m = -1$.
Note that $\xi$ is of the form
$\exp(\pi \sqrt{-1} \ell/m)$ with $\ell$ odd
and that
$$\frac{-1-\exp(\pi \sqrt{-1} \ell/a_j)}{\exp(\pi \sqrt{-1} \ell/a_j)-1}
= \sqrt{-1} \cot \frac{\pi \ell}{2a_j}.$$
Then, we immediately get Proposition~\ref{prop:cot}.

By considering those odd integers $\ell$
which gives zero in (\ref{eq:cot}),
we get the following.

\begin{proposition}\label{proposition:odd}
Let $f$ and $g$ be the Brieskorn polynomials 
$$
f(z) = \sum_{j=1}^{n+1} z_j^{a_j} \quad \text{and} \quad
g(z) = \sum_{j=1}^{n+1} z_j^{b_j}.
$$
If their Seifert forms are Witt equivalent
over the real numbers, then we have
\begin{eqnarray*}
& & \{\ell \in \Z\,|\, \mbox{\rm $\ell$
is odd and is a multiple of some $a_j$}\} \\
& = & 
\{\ell \in \Z\,|\, \mbox{\rm $\ell$
is odd and is a multiple of some $b_j$}\}.
\end{eqnarray*}
In particular, if $a_j$ is odd for some $j$,
then $b_k$ is odd for some $k$,
and the minimal odd exponent for $f$
coincides with that for $g$.
\end{proposition}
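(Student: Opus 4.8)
\emph{Proof proposal.} The plan is to read off the asserted set equality from the cotangent identity of Proposition~\ref{prop:cot} by isolating the odd integers $\ell$ at which one of the factors vanishes.

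First I would record the elementary behaviour of $\cot\frac{\pi\ell}{2a_j}$ for $\ell$ odd. Since $\ell$ is odd and $2a_j$ is even, $2a_j$ never divides $\ell$, so $\frac{\pi\ell}{2a_j}$ is never an integer multiple of $\pi$ and hence $\cot\frac{\pi\ell}{2a_j}$ is a well-defined finite real number. Moreover $\cot\frac{\pi\ell}{2a_j}=0$ exactly when $\frac{\ell}{a_j}$ is an odd integer; and for $\ell$ odd this happens if and only if $a_j$ divides $\ell$, because if $a_j\mid\ell$ then $\ell/a_j$ is an integer dividing the odd number $\ell$, hence odd. Therefore, for $\ell$ odd the product $\prod_{j=1}^{n+1}\cot\frac{\pi\ell}{2a_j}$ is finite and equals $0$ if and only if $a_j\mid\ell$ for some $j$, and likewise for the $b_j$'s.

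Next, assuming the Seifert forms of $f$ and $g$ are Witt equivalent over $\R$, Proposition~\ref{prop:cot} gives that the two products agree for every odd $\ell$; in particular one vanishes if and only if the other does. Combined with the previous paragraph this says precisely that, for each odd $\ell$, some $a_j$ divides $\ell$ if and only if some $b_k$ divides $\ell$, which is the claimed equality of the two sets. For the ``in particular'' part: if some $a_j$ is odd, then taking $\ell=a_j$ shows $a_j$ lies in the left-hand set, hence in the right-hand set, so $b_k\mid a_j$ for some $k$, and since $b_k$ divides the odd number $a_j$ it is odd. Finally, letting $a$ (resp.\ $b$) be the smallest odd exponent of $f$ (resp.\ $g$) — both exist by what was just shown — the choice $\ell=a$ gives $b_k\mid a$ for some $k$, so $b_k$ is odd and $b_k\le a$, whence $b\le a$; the symmetric argument yields $a\le b$, so $a=b$.

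I do not expect a serious obstacle here; the only point needing care is the bookkeeping in the first step — checking that the cotangent factors have no poles at the relevant points (so that ``the product vanishes'' is genuinely equivalent to ``some factor vanishes'') and correctly tracking the parity of the quotient $\ell/a_j$.
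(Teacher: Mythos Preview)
Your proposal is correct and follows exactly the approach indicated in the paper, which simply remarks that the proposition is obtained ``by considering those odd integers $\ell$ which give zero in \eqref{eq:cot}.'' You have carefully filled in the details the paper leaves implicit---checking that no factor has a pole for odd $\ell$, identifying the vanishing condition as $a_j\mid\ell$, and deducing the ``in particular'' statements---so there is nothing to add.
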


\begin{remark}
For nondegenerate weighted homogeneous polynomials,
we also have results similar to 
Propositions~\ref{prop:cot} or \ref{proposition:odd}. 
However, the statement becomes complicated, so we omit
them here (compare this with \cite[Proposition~2.6]{Saeki2000}).
\end{remark}

Now, Proposition~\ref{prop:FM} is a consequence
of the Milnor-Orlik and
Brieskorn-Pham theorems on the characteristic
polynomials \cite{Milnor, MO} together 
with the Fox-Milnor type relation. Here, a Fox-Milnor type 
relation for two polynomials $f$ and $g$ with Witt equivalent 
Seifert forms means that there exists a polynomial 
$\gamma(t)$ such that $\Delta_f(t)\,\Delta_g(t) 
= \pm t^{\deg(\gamma)}\gamma(t)\,\gamma(t^{-1})$ (for details,
see \cite{BS}, for example). Here we give another
proof, using Theorem~\ref{thm1}, as follows.

\begin{proof}[Proof of Proposition~\textup{\ref{prop:FM}}]
Since $P_f(t) \equiv P_g(t)
\mod{t+1}$, there exists a polynomial
$R(t) \in \Z[t^{1/m}]$ for some $m$ such that
$$P_f(t) - P_g(t) = (t+1)R(t) = (t-1)R(t) + 2R(t).$$
Therefore, for each $\lambda \in S^1$,
the multiplicities of $\lambda$ in the characteristic
polynomials $\Delta_f(t)$ and $\Delta_g(t)$
are congruent modulo $2$ to each other
(for details, see \cite{N, Saeki2000}, for example). 
Then, the result follows in view of the 
Milnor-Orlik formula (\ref{Milnor-Orlik}) for the characteristic
polynomial.
\end{proof}

For the proof of Theorem~\ref{thm2}, we
need the following.

\begin{lemma}\label{lemma:lambda}
For integers $2 \leq a_1 < a_2 < \cdots < a_p$ and 
$2 \leq b_1 < b_2 < \cdots < b_q$, we have
\begin{equation}
\sum_{j = 1}^p \Lambda_{a_j} \equiv
\sum_{j=1}^q \Lambda_{b_j} \pmod{2}
\label{eq:lambda}
\end{equation}
if and only if $p=q$ and $a_j = b_j$ for all
$j$.
\end{lemma}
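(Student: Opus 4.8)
The plan is to work in the integral group ring $\Z\C^\ast$ modulo $2$, i.e.\ in $(\Z/2)\C^\ast$, and to exploit the fact that the elements $\Lambda_a = \mathrm{divisor}\,(t^a-1) = \sum_{d\mid a}\langle \zeta_d\rangle$-type expansions are governed by the divisor structure of the integers $a_j$, $b_j$. More precisely, I would use the standard relation $\Lambda_a = \sum_{d \mid a} \Phi_d$, where $\Phi_d$ denotes the ``divisor of the $d$-th cyclotomic polynomial,'' i.e.\ the formal sum of the primitive $d$-th roots of unity, so that the $\Phi_d$ form a $\Z$-basis (hence a $\Z/2$-basis) of the subgroup of $\Z\C^\ast$ they span. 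Rewriting (\ref{eq:lambda}) in this basis, the congruence $\sum_j \Lambda_{a_j} \equiv \sum_j \Lambda_{b_j} \pmod 2$ becomes, for every positive integer $d$,
$$
\#\{\,j : d \mid a_j\,\} \equiv \#\{\,j : d \mid b_j\,\} \pmod 2 .
$$
The ``if'' direction is trivial, so the whole content is to show these mod-$2$ divisor-count equalities force $p=q$ and $a_j=b_j$ for all $j$.

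The key step is a descending induction on $d$, starting from the largest integer appearing among the $a_j$ and $b_j$. Let $N = \max(a_p, b_q)$. Taking $d = N$ in the displayed congruence: if $N$ occurs among the $a_j$ it occurs exactly once (the $a_j$ are strictly increasing), so the left count is $1$ and hence odd, forcing $N$ to occur among the $b_j$ as well, necessarily as $b_q$; and symmetrically. Thus $a_p = b_q = N$. Now I would peel these off and induct: having matched the top $k$ exponents $a_{p} = b_{q}, \ldots, a_{p-k+1} = b_{q-k+1}$, consider the largest integer $M$ among the remaining exponents $a_1, \ldots, a_{p-k}$ and $b_1, \ldots, b_{q-k}$; apply the congruence at $d = M$. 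The multiples of $M$ among \emph{all} the $a_j$ are: $M$ itself (once, possibly), plus those already-matched top exponents that happen to be multiples of $M$ --- but that latter contribution is the \emph{same} for the $a$'s and the $b$'s, since those exponents have already been matched pairwise. So the parities of the ``new'' multiple-of-$M$ counts agree, and exactly as before this forces $M$ to appear among both remaining lists, as the current largest element of each. This continues until one list is exhausted; if the other were not, its largest remaining element $M'$ would give a count of $1$ on one side and $0$ on the other, a contradiction. Hence $p = q$ and $a_j = b_j$ for all $j$.

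The main obstacle --- really the only subtle point --- is bookkeeping the contribution of the already-matched exponents to the divisor counts at each stage, and making sure the induction hypothesis is strong enough to guarantee that this contribution cancels in the mod-$2$ congruence. I would phrase the induction so that the hypothesis at stage $k$ is precisely ``$a_{p-i+1} = b_{q-i+1}$ for $1 \le i \le k$,'' which makes the cancellation immediate: for any $d$, the matched exponents contribute equally to $\#\{j : d \mid a_j\}$ and $\#\{j : d \mid b_j\}$, so the congruence restricted to the unmatched exponents holds verbatim, and the argument on the new top element goes through unchanged. One should also note at the outset that the hypothesis $a_1, b_1 \ge 2$ guarantees $\Phi_1 = \langle 1\rangle$ never enters (no $a_j$ or $b_j$ equals $1$), so there is no degenerate small-$d$ case to worry about; alternatively the descending induction simply never reaches $d=1$ in a problematic way.
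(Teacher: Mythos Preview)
Your proof is correct and follows essentially the same strategy as the paper: compare the contribution of the largest index appearing on either side and then peel it off and induct downward. The paper's version is terser---it directly observes that if $a_p < b_q$ then the coefficient of the single element $\exp(2\pi\sqrt{-1}/b_q)$ is $1$ on the right and $0$ on the left, forcing $a_p = b_q$, and then inducts---whereas your cyclotomic decomposition $\Lambda_a = \sum_{d\mid a}\Phi_d$ is just a systematic way of isolating that same coefficient. (One small slip: $\Phi_1$ does appear in every $\Lambda_{a_j}$ since $1\mid a_j$; the reason the $d=1$ case is harmless is your alternative remark that the descending induction never needs it, not that it is absent.)
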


\begin{proof}
Suppose that $a_p < b_q$. Then the coefficient
of $\exp(2 \pi \sqrt{-1}/b_q)$ on the right hand side
of (\ref{eq:lambda}) is equal to $1$, while the
corresponding coefficient on the left hand side
is equal to $0$. This is a contradiction.
So, we must have $a_p = b_q$. Then we have
$$\sum_{j = 1}^{p-1} \Lambda_{a_j} \equiv
\sum_{j=1}^{q-1} \Lambda_{b_j} \pmod{2}.$$
Therefore, by induction, we get the desired conclusion.
\end{proof}

\begin{proof}[Proof of Theorem~\textup{\ref{thm2}}]
Suppose that the algebraic knots $K_f$ and $K_g$
are cobordant.
We may assume $a_1 < a_2 < \cdots < a_{n+1}$
and $b_1 < b_2 < \cdots < b_{n+1}$.
By Proposition~\ref{prop:FM} (2), we have
\begin{equation}
\prod_{j=1}^{n+1}(\Lambda_{a_j}-1) - (-1)^{n+1} \equiv 
\prod_{j=1}^{n+1}(\Lambda_{b_j}-1) - (-1)^{n+1} \pmod{2}.
\label{eq:eq1}
\end{equation}

Recall that for positive integers $a$ and $b$, we have
$$\Lambda_a \Lambda_b = (a, b) \Lambda_{[a, b]},$$
where $(a, b)$ is the greatest common divisor of
$a$ and $b$, and $[a, b]$ denotes the least
common multiple of $a$ and $b$.

By considering
the term of the form $\Lambda_d$ with the smallest $d$
on both sides of (\ref{eq:eq1}),
we see that $a_1 = b_1$ by Lemma~\ref{lemma:lambda}. 
By subtracting $\Lambda_{a_1}$
from the both sides of (\ref{eq:eq1}), we see $a_2 = b_2$, since
$a_2$ (or $b_2$) is not a multiple of $a_1$
(resp.\ $b_1$).
Then, by further subtracting $\Lambda_{a_2} + (a_1, a_2)
\Lambda_{[a_1, a_2]}$ from (\ref{eq:eq1}),
we see $a_3 = b_3$, since $a_3$ (or $b_3$)
is not a multiple of $a_1$ or $a_2$ (resp.\ $b_1$
or $b_2$). Repeating this procedure, we see that
$a_j = b_j$ for all $j$.

Conversely, if $f$ and $g$ have the same set of
exponents, then $K_f$ and $K_g$ are isotopic
and hence cobordant. This completes the proof.
\end{proof}

\begin{proof}[Proof of Proposition~\textup{\ref{prop:multi}}]
In the proof of Theorem~\ref{thm2}, we proved that the smallest 
exponents of $f$ and $g$ are equal, provided that
there is only one smallest exponent for
each of $f$ and $g$. Since we assume that
the exponents of $f$ (or $g$)
are pairwise distinct, the same argument works. 
\end{proof}

\begin{remark}
Theorem~\ref{thm2} implies that
two algebraic knots $K_f$ and $K_g$ 
associated with certain Brieskorn polynomials
are isotopic if and only of they
are cobordant. Recall that according to
Yoshinaga-Suzuki \cite{YS}, two algebraic
knots associated with Brieskorn polynomials
in general are isotopic if and
only if they have the same set of exponents. In fact,
they showed that the characteristic polynomials
coincide if and only if the Brieskorn polynomials
have the same set of exponents. 
\end{remark}

\begin{remark}
For the case where $n = 2$ and the knots are
homology spheres, Theorem~\ref{thm2} has been obtained
in \cite{Saeki88} by using the Fox-Milnor type
relation.
\end{remark}

\begin{example}
For all integers $p_1, p_2, \ldots, p_{n-3} \geq 2$,
$n \geq 3$, the product of the 
characteristic polynomials of the algebraic knots associated 
with 
$$f(z) = z_1^{p_1} + z_2^{p_2} + \cdots + z_{n-3}^{p_{n-3}} + 
z_{n-2}^8 + z_{n-1}^8 + z_n^4 + z_{n+1}^4$$ 
and
$$g(z) = z_1^{p_1} + z_2^{p_2} + \cdots + z_{n-3}^{p_{n-3}} + 
z_{n-2}^6 + z_{n-1}^6 + z_n^6 + z_{n+1}^6$$
is a square. This means that the characteristic
polynomials $\Delta_f(t)$ and $\Delta_g(t)$ of the
algebraic knots $K_f$ and $K_g$, respectively, satisfy
the Fox-Milnor type relation, although their exponents
are distinct. Thus the assumptions in 
Theorem~\ref{thm2} and Proposition~\ref{prop:multi}
are necessary, as long as the proof depends only
on the Fox-Milnor type relation.
\end{example}

\section{Further results}\label{section4}

In this section, we give some more precise
results for the case of two or three variables.

\begin{proposition}\label{prop:B2}
Let $f(z) = z_1^{a_1} + z_2^{a_2}$ and
$g(z) = z_1^{b_1} + z_2^{b_2}$ be Brieskorn polynomials
of two variables. If the Seifert forms $L_f$
and $L_g$ are Witt equivalent over the real
numbers, then $a_j = b_j$, $j = 1, 2$, up to order.
\end{proposition}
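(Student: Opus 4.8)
The plan is to invoke Proposition~\ref{prop:cot} with $n+1=2$, which turns the hypothesis into the cotangent identity
\[
\cot\frac{\pi\ell}{2a_1}\cot\frac{\pi\ell}{2a_2}
=\cot\frac{\pi\ell}{2b_1}\cot\frac{\pi\ell}{2b_2}
\qquad(\star)
\]
for all odd $\ell$, from which I want to conclude $\{a_1,a_2\}=\{b_1,b_2\}$. The easy case is when the two pairs share a value $c$: cancelling the factor $\cot\frac{\pi\ell}{2c}$ from both sides (legitimate whenever $\ell$ is not a multiple of $c$, e.g.\ at $\ell=1$) reduces $(\star)$ at $\ell=1$ to $\cot\frac{\pi}{2a}=\cot\frac{\pi}{2b}$ for the two remaining exponents, hence $a=b$ because $x\mapsto\cot\frac{\pi}{2x}$ is strictly increasing on $[2,\infty)$. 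So I would assume the two pairs are disjoint, normalise $a_1\le a_2$, $b_1\le b_2$, and --- using the symmetry between $f$ and $g$ --- take $a_2$ to be the largest of the four exponents; then I aim for a contradiction.

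Three constraints push towards it. (i) $(\star)$ at $\ell=1$ with the monotonicity above shows that the sorted pair $(a_1,a_2)$ cannot dominate $(b_1,b_2)$ coordinatewise; since $a_2>b_2$ by disjointness, only the interleaved situation $a_1<b_1\le b_2<a_2$ remains. (ii) Proposition~\ref{proposition:odd} equates the set of odd multiples of the $a_j$ with that of the $b_j$; as an even integer has no odd multiple, this matches the odd parts of the exponents and in particular excludes one pair being entirely even while the other is not. (iii) Passing to the rational-function reformulation of $(\star)$ given after the proof of Theorem~\ref{thm1}, for a common multiple $m$ of the four exponents the numerator $N(X)$ of
\[
\frac{(1+X^{m/a_1})(1+X^{m/a_2})}{(X^{m/a_1}-1)(X^{m/a_2}-1)}
-\frac{(1+X^{m/b_1})(1+X^{m/b_2})}{(X^{m/b_1}-1)(X^{m/b_2}-1)}
\]
must be divisible by $X^m+1$; but expanding near $0$ and $\infty$ gives $\operatorname{ord}_0 N=m/a_2$ and $\deg N=m\bigl(\tfrac1{a_1}+\tfrac1{b_1}+\tfrac1{b_2}\bigr)$ with $N\neq0$, so $N$ has only $m\bigl(\tfrac1{a_1}+\tfrac1{b_1}+\tfrac1{b_2}-\tfrac1{a_2}\bigr)$ zeros in $\C^\ast$, forcing $\tfrac1{a_1}+\tfrac1{b_1}+\tfrac1{b_2}-\tfrac1{a_2}\ge1$.

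The expected main obstacle is that (i)--(iii) do not kill every interleaved configuration, since the reciprocal-sum inequality can still hold when $a_2$ is large. For the survivors I would extract the contradiction from an intermediate coefficient of $N(X)$: knowing $\operatorname{ord}_0 N$, $\deg N$ and the divisibility by $X^m+1$ pins down the shape of $N$, and one then verifies that some coefficient it would have to vanish is actually nonzero --- this coefficient is the number of solutions of $i/a_1+j/a_2=\alpha$ minus that of $i/b_1+j/b_2=\alpha$ for a suitable $\alpha$, equivalently the value of $(\star)$ at a second, well-chosen odd $\ell$. This coefficient analysis is the only genuinely delicate step; everything else is the bookkeeping above.
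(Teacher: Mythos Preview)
Your plan is sound but you have not noticed that your ingredients already finish the proof, so the ``delicate step'' you postpone never arises. From the interleaving $a_1<b_1\le b_2<a_2$ of (i) together with Proposition~\ref{proposition:odd}, one gets that \emph{all four} exponents are even: $a_1$ is even since an odd $a_1$ would be an odd multiple of some $b_j>a_1$; then $b_1,b_2$ are even since an odd $b_k$ would be an odd multiple of $a_1$ (impossible, $a_1$ even) or of $a_2>b_k$; finally $a_2$ is even since an odd $a_2$ would be an odd multiple of some even $b_j$. With all four even and strictly interleaved one has $a_1\ge2$, $b_1\ge a_1+2\ge4$, $b_2\ge4$, whence
\[
\frac1{a_1}+\frac1{b_1}+\frac1{b_2}-\frac1{a_2}\le \frac12+\frac14+\frac14-\frac1{a_2}<1,
\]
contradicting your inequality (iii). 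So (i)--(iii) already yield the contradiction and no coefficient analysis is needed.

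This route is different from the paper's. The paper treats the case where some exponent is odd exactly as you do (Proposition~\ref{proposition:odd} forces a common exponent, then cancel), but for the all-even case it invokes Proposition~\ref{prop:FM}(2): expanding $(\Lambda_{a_1}-1)(\Lambda_{a_2}-1)\equiv(\Lambda_{b_1}-1)(\Lambda_{b_2}-1)\pmod 2$ and using that $(a_j,a_k)$ is even gives $\Lambda_{a_1}+\Lambda_{a_2}\equiv\Lambda_{b_1}+\Lambda_{b_2}\pmod2$, and Lemma~\ref{lemma:lambda} finishes (with a separate one-line check when $a_1=a_2$). Your argument instead bounds the degree of the numerator $N(X)$, which is precisely the device the paper reserves for the \emph{non-Brieskorn} weighted-homogeneous case in Proposition~\ref{prop:whp2}; it is a little longer here but has the virtue of not needing the divisor machinery.
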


\begin{proof}
If $a_1$ or $a_2$ is odd, then by
Proposition~\ref{proposition:odd} we may assume
that $a_1 = b_1$ is odd.
Then by Proposition~\ref{prop:cot}, we have
$$\cot \frac{\pi}{2 a_2} = \cot \frac{\pi}{2 b_2},$$
which implies that $a_2 = b_2$.

Therefore, we may assume that all the exponents
for $f$ and $g$ are even. Then by Proposition~\ref{prop:FM} (2),
we have
$$(\Lambda_{a_1} - 1)(\Lambda_{a_2} -1)
\equiv (\Lambda_{b_1} - 1)(\Lambda_{b_2} -1) \pmod{2},$$
which implies that 
$$\Lambda_{a_1} + \Lambda_{a_2} \equiv
\Lambda_{b_1} + \Lambda_{b_2} \pmod{2}.$$
If $a_1 \neq a_2$, then we see that
$b_1 \neq b_2$, and $a_j = b_j$, $j = 1, 2$, up to order
by Lemma~\ref{lemma:lambda}.
If $a_1 = a_2$, then we must have $b_1 = b_2$.
In this case, by Proposition~\ref{prop:cot}, we have
$$\cot^2 \frac{\pi}{2 a_1} = \cot^2 \frac{\pi}{2 b_1},$$
which implies that $a_1 = b_1$.
This completes the proof.
\end{proof}

\begin{proposition}
Let $f(z) = z_1^{a_1} + z_2^{a_2} + z_3^{a_3}$ and
$g(z) = z_1^{b_1} + z_2^{b_2} + z_3^{b_3}$ 
be Brieskorn polynomials
of three variables. If the Seifert forms $L_f$
and $L_g$ are Witt equivalent over the real
numbers, then $a_j = b_j$, $j = 1, 2, 3$, up to order.
\end{proposition}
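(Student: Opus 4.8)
The plan is to imitate the proof of Proposition~\ref{prop:B2}, reducing the three--variable problem to the two--variable one whenever possible and disposing of the residual degenerate configurations directly. We may reorder the exponents freely, and by Proposition~\ref{prop:cot} the hypothesis is equivalent to
\[
\prod_{j=1}^3 \cot\frac{\pi\ell}{2a_j}\;=\;\prod_{j=1}^3\cot\frac{\pi\ell}{2b_j}\qquad\text{for every odd }\ell;
\]
moreover Proposition~\ref{prop:FM}(2) gives $\prod_{j=1}^3(\Lambda_{a_j}-1)\equiv\prod_{j=1}^3(\Lambda_{b_j}-1)\pmod 2$. We distinguish two cases according to whether all exponents are even.

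Suppose first that some exponent is odd. By Proposition~\ref{proposition:odd}, $f$ and $g$ then have the same set of odd multiples of their exponents, and in particular the same minimal odd exponent $m$; reorder so that $a_1=b_1=m$. For every odd $\ell$ with $m\nmid\ell$ the factor $\cot(\pi\ell/2m)$ is finite and nonzero, so it may be cancelled from the identity above to yield $\cot(\pi\ell/2a_2)\cot(\pi\ell/2a_3)=\cot(\pi\ell/2b_2)\cot(\pi\ell/2b_3)$; since both sides are periodic in $\ell$, one checks (by replacing $\ell$ inside a fixed residue class by a representative that is not a multiple of $m$) that this holds for all odd $\ell$. By Proposition~\ref{prop:cot} in two variables, the Brieskorn polynomials $z_1^{a_2}+z_2^{a_3}$ and $z_1^{b_2}+z_2^{b_3}$ have Seifert forms that are Witt equivalent over $\R$, so Proposition~\ref{prop:B2} gives $\{a_2,a_3\}=\{b_2,b_3\}$, and together with $a_1=b_1=m$ this finishes the case.

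Now suppose all the $a_j$, and hence (by Proposition~\ref{proposition:odd}) all the $b_j$, are even. Then every pairwise gcd of exponents is even, so $\Lambda_{a_i}\Lambda_{a_j}\equiv\Lambda_{a_1}\Lambda_{a_2}\Lambda_{a_3}\equiv 0\pmod 2$, and the congruence of Proposition~\ref{prop:FM}(2) collapses to $\Lambda_{a_1}+\Lambda_{a_2}+\Lambda_{a_3}\equiv\Lambda_{b_1}+\Lambda_{b_2}+\Lambda_{b_3}\pmod 2$. If $a_1<a_2<a_3$, then examining the coefficient of the primitive $a_3$-th roots of unity and using the symmetry between $f$ and $g$ one first gets $a_3=b_3$, then checks $b_1<b_2<b_3$, whence Lemma~\ref{lemma:lambda} applies and yields $a_j=b_j$. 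If two of the $a_j$ coincide, the congruence degenerates because $2\Lambda_a\equiv 0$; here one instead evaluates the cotangent identity at $\ell=1$, where, after matching the forced exponents, one is left with an equation of the shape $\cot^k(\pi/2a)=\cot^k(\pi/2b)$ with $k\in\{2,3\}$, and since $x\mapsto\cot(\pi/2x)$ is positive and strictly increasing on $[2,\infty)$ this forces $a=b$.

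The main obstacle will be the organization of the degenerate subcases. In the odd case one must argue more carefully when $m$ divides one of the remaining exponents of $f$ or $g$, so that the relevant common period is itself divisible by $m$ and the simple ``replace $\ell$ in its residue class'' device no longer produces a non--multiple of $m$; one then has to supplement the argument with the mod~$2$ congruence and with evaluations of the cotangent identity at small odd $\ell$. In the all--even case one must run through the patterns $a_1=a_2<a_3$, $a_1<a_2=a_3$, $a_1=a_2=a_3$ together with all possible patterns for $g$, and verify that the $\ell=1$ evaluation resolves each of them. These are finite verifications, but they carry the real weight of the proof.
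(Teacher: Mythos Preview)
Your treatment of the all-even case is essentially the paper's: the congruence collapses to $\sum_j\Lambda_{a_j}\equiv\sum_j\Lambda_{b_j}\pmod 2$, and after cancelling repeated exponents in pairs Lemma~\ref{lemma:lambda} produces a common exponent; since every exponent is even, no odd $\ell$ is a multiple of any of them, so the matching cotangent factor may be cancelled for \emph{all} odd $\ell$ and Proposition~\ref{prop:B2} finishes. Your case split on repeated exponents is more explicit than the paper's one-line appeal to Lemma~\ref{lemma:lambda}, but the idea is the same.

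The genuine gap is in the odd case, and it is not mere bookkeeping. Your periodicity device needs every odd residue class modulo $2L$, $L=\operatorname{lcm}(a_2,a_3,b_2,b_3)$, to contain a representative not divisible by $m$; this fails whenever $m\mid L$, which can occur even when $m$ divides none of $a_2,a_3,b_2,b_3$ individually (e.g.\ $m=15$, $a_2=6$, $a_3=10$), so your diagnosis of the obstruction is already slightly off. More importantly, the reduction to Proposition~\ref{prop:B2} is simply unavailable in that situation, and the vague promise to ``supplement the argument with the mod~$2$ congruence and evaluations at small odd $\ell$'' hides the substantive step.

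The paper does \emph{not} attempt to reduce to two variables here. Assuming $a_2<b_2$ (otherwise $\ell=1$ finishes), the $\ell=1$ identity gives $a_2<b_2\le b_3<a_3$. One then compares the full divisors $\prod_j(\Lambda_{a_j}-1)$ and $\prod_j(\Lambda_{b_j}-1)$ modulo~$2$: since $a_2$ is strictly smallest among all indices and lcm's that appear, the term $\Lambda_{a_2}$ can only be cancelled by $\Lambda_{[a_1,a_2]}$, forcing $a_1\mid a_2$; iterating this gives that $a_2,a_3,b_2,b_3$ are all multiples of $a_1$. The decisive idea is then a \emph{sign} argument: because $a_1\ge 3$ and $a_1\mid a_2$, one of $\ell=a_2+1,\,a_2+2$ is odd and not a multiple of $a_1$, and since $b_2-a_2\ge a_1\ge 3$ it satisfies $a_2<\ell<b_2\le b_3<a_3$. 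For this $\ell$ the product $\cot(\ell\pi/2a_2)\cot(\ell\pi/2a_3)$ is negative (the first argument lies in $(\pi/2,\pi)$, the second in $(0,\pi/2)$) while $\cot(\ell\pi/2b_2)\cot(\ell\pi/2b_3)$ is positive, contradicting the cancelled identity. This sign contradiction is the missing ingredient in your sketch.
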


\begin{proof}
First suppose that $a_1$, $a_2$ and $a_3$ are all
even. Then by Proposition~\ref{proposition:odd},
$b_1$, $b_2$ and $b_3$ are all even. In this case,
by Proposition~\ref{prop:FM} (2), we have
$$\Lambda_{a_1} + \Lambda_{a_2} + \Lambda_{a_3}
\equiv \Lambda_{b_1} + \Lambda_{b_2} + \Lambda_{b_3}
\pmod{2}.$$
Thus, we may assume that $a_1 = b_1$ 
by Lemma~\ref{lemma:lambda}.
Then by Proposition~\ref{prop:cot}, we have
$$\cot \frac{\pi \ell}{2 a_2} \cot \frac{\pi \ell}{2 a_3}
= \cot \frac{\pi \ell}{2 b_2} \cot \frac{\pi \ell}{2 b_3}$$
for all odd integers $\ell$. Then, by Proposition~\ref{prop:B2},
we see that $a_j = b_j$, $j = 1, 2, 3$, up to order.

Now suppose that $a_1$, $a_2$ or $a_3$ is odd.
Then, by Proposition~\ref{proposition:odd}, we may assume
that $a_1 = b_1$ is odd and $a_2 \leq a_3$ and
$b_2 \leq b_3$. 

Then by Proposition~\ref{prop:cot}, we have
\begin{equation}
\cot \frac{\ell\pi}{2a_2} \cot \frac{\ell \pi}{2a_3}
= \cot \frac{\ell\pi}{2b_2} \cot \frac{\ell \pi}{2b_3}
\label{eq:3var}
\end{equation}
for all odd integers $\ell$ that are not a multiple of $a_1 = b_1$.
If $a_2 = b_2$, then putting $\ell = 1$, we get
$a_3 = b_3$. So, suppose that $a_2 < b_2$.
Then by (\ref{eq:3var}) with $\ell = 1$,
we have $a_2 < b_2 \leq b_3 < a_3$.

Let us consider the characteristic polynomials
$\Delta_f(t)$ and $\Delta_g(t)$. We have
\begin{eqnarray*}
\mathrm{divisor}\,\Delta_f(t) & = & (\Lambda_{a_1} -1)
(\Lambda_{a_2} - 1)(\Lambda_{a_3} - 1) \\
& = & (a_1, a_2)([a_1, a_2], a_3)\Lambda_{[a_1, a_2, a_3]}
- (a_1, a_2)\Lambda_{[a_1, a_2]} - (a_1, a_3)\Lambda_{[a_1, a_3]} \\
& & \quad - (a_2, a_3)\Lambda_{[a_2, a_3]} + \Lambda_{a_1}
+ \Lambda_{a_2} + \Lambda_{a_3} - 1
\end{eqnarray*}
and
\begin{eqnarray*}
\mathrm{divisor}\,\Delta_g(t) & = & 
(b_1, b_2)([b_1, b_2], b_3)\Lambda_{[b_1, b_2, b_3]}
- (b_1, b_2)\Lambda_{[b_1, b_2]} - (b_1, b_3)\Lambda_{[b_1, b_3]} \\
& & \quad - (b_2, b_3)\Lambda_{[b_2, b_3]} + \Lambda_{b_1}
+ \Lambda_{b_2} + \Lambda_{b_3} - 1.
\end{eqnarray*}
Since $[a_1, a_2, a_3]$, $[a_1, a_3]$,
$[a_2, a_3]$, $a_3$, 
$[b_1, b_2, b_3]$, $[b_1, b_2]$, $[b_1, b_3]$,
$[b_2, b_3]$, $b_2$ and $b_3$
are all strictly greater than $a_2$,
by Proposition~\ref{prop:FM} (2)
together with $a_1 = b_1$, we must have
$[a_1, a_2] = a_2$. Thus $a_2$ is a multiple of $a_1$.
Then by Proposition~\ref{prop:FM} (2) again, we have
\begin{eqnarray*}
\Lambda_{[a_1, a_3]} + \Lambda_{a_3}
& \equiv & ([b_1, b_2], b_3)\Lambda_{[b_1, b_2, b_3]}
+ \Lambda_{[b_1, b_2]} + \Lambda_{[b_1, b_3]} \\
& & \quad + (b_2, b_3)\Lambda_{[b_2, b_3]} + \Lambda_{b_2}
+ \Lambda_{b_3} \pmod{2},
\end{eqnarray*}
since $a_1 = b_1$ is odd.

If $b_2 < b_3$, then we must have
$[b_1, b_2] = b_2$, i.e., $b_2$ is a multiple of $b_1$.
Then, we see that $[a_1, a_3] = a_3$ and
$[b_1, b_3] = b_3$. Therefore,
$a_2$, $a_3$, $b_2$ and $b_3$ are all multiples of $a_1 = b_1$.
Since $a_1$ is odd and $a_1 \geq 3$,
there exists an odd integer $\ell$ ($= a_2+1$
or $a_2+2$)
which is not a multiple of $a_1$ such that
$a_2 < \ell < b_2$.
Then for this $\ell$,
the left hand side of (\ref{eq:3var})
is negative, while the right hand side is positive.
This is a contradiction.

If $b_2 = b_3$, then we have
$$\Lambda_{[a_1, a_3]} + \Lambda_{a_3}
\equiv b_2\Lambda_{[b_1, b_2]}
+ b_2\Lambda_{b_2} \pmod{2}.$$
Thus, $[a_1, a_3] = a_3$, and $a_3$
is a multiple of $a_1$.
Then, using an odd integer $\ell$ ($= a_2+1$ or $a_2+2$)
which is not a multiple of $a_1$ such that
$a_2 < \ell < a_3$ in (\ref{eq:3var}), we again
get a contradiction, since $b_2 = b_3$.

Therefore, we must have $a_2 = b_2$ and
$a_3 = b_3$. This completes the proof.
\end{proof}

\begin{proposition}\label{prop:whp2}
Let $f$ and $g$ be weighted homogeneous
polynomials of two variables with
weights $(w_1, w_2)$ and $(w_1', w_2')$,
respectively, with $w_j, w_j' \geq 2$.
If their Seifert forms
are Witt equivalent over the real numbers,
then $w_j = w_j'$, $j = 1, 2$, up to order.
\end{proposition}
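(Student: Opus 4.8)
The plan is to turn Theorem~\ref{thm1} into a trigonometric identity of the same shape as the one driving the proof of Proposition~\ref{prop:B2}, and then carry out the Brieskorn-style elimination, reading off the weights one pair (numerator, denominator) at a time. By Theorem~\ref{thm1}, $L_f$ and $L_g$ are Witt equivalent over $\R$ if and only if $P_f(t)\equiv P_g(t)\pmod{t+1}$. Writing $w_j=u_j/v_j$ and $w_j'=u_j'/v_j'$ in lowest terms and repeating, verbatim, the computation made right after the proof of Theorem~\ref{thm1} --- the weighted-homogeneous analogue of Proposition~\ref{prop:cot} referred to in the Remark after Proposition~\ref{proposition:odd} --- this congruence is equivalent to
$$\cot\frac{\pi\ell}{2w_1}\,\cot\frac{\pi\ell}{2w_2}=\cot\frac{\pi\ell}{2w_1'}\,\cot\frac{\pi\ell}{2w_2'}$$
for every odd integer $\ell$ at which all four factors are finite. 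Reordering, I may assume $w_1\le w_2$ and $w_1'\le w_2'$; the goal is $w_1=w_1'$ and $w_2=w_2'$.

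The first step is to identify the numerators. Since $\cot\frac{\pi\ell}{2w_j}=\cot\frac{\pi\ell v_j}{2u_j}$, this factor vanishes at an odd $\ell$ exactly when $\ell$ is an odd multiple of $u_j$ and $v_j$ is odd, has a pole at an odd $\ell$ exactly when $\ell$ is an odd multiple of $u_j$ and $v_j$ is even, and is finite and nonzero at every odd $\ell$ when $u_j$ is even. Matching the odd-integer zero loci and, separately, the odd-integer pole loci of the two sides of the identity --- the analogue of Proposition~\ref{proposition:odd} --- identifies, through the divisibility bookkeeping of Lemma~\ref{lemma:lambda}, those numerators carrying an odd denominator (via the zeros) and those carrying an even denominator (via the poles); any numerators invisible to both loci (the even ones) are then recovered by feeding Proposition~\ref{prop:FM}(1) into Lemma~\ref{lemma:lambda} --- in the cleanest case, when all numerators are even and hence all denominators odd, the cross term in $\prod_j(\tfrac1{v_j}\Lambda_{u_j}-1)$ carries the even factor $(u_1,u_2)$, the constant terms cancel in the difference for $f$ and $g$, and the relation collapses to $\Lambda_{u_1}+\Lambda_{u_2}\equiv\Lambda_{u_1'}+\Lambda_{u_2'}\pmod 2$. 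Granting $u_1\ne u_2$, this combination of data forces $\{u_1,u_2\}=\{u_1',u_2'\}$ with matched denominator parities, so after relabelling $u_1=u_1'$ and $u_2=u_2'$. I would then cancel the factor attached to $u_1$ --- finite and nonzero at $\ell=1$ since $w_1\ge 2$ --- exactly as in the proof of Proposition~\ref{prop:B2}, reducing to a one-factor identity, and recover the denominators by evaluating at $\ell=1$ and invoking the strict monotonicity, hence injectivity, of $\cot$ on $(0,\pi/2)$.

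When $u_1=u_2$ the numerator bookkeeping collapses and a separate argument is needed: here all four numerators equal some $u$, and one must still read $\{v_1,v_2\}$ and $\{v_1',v_2'\}$ off the identity $\cot\frac{\pi\ell v_1}{2u}\cot\frac{\pi\ell v_2}{2u}=\cot\frac{\pi\ell v_1'}{2u}\cot\frac{\pi\ell v_2'}{2u}$; the sub-case $v_1=v_2$ (i.e.\ $w_1=w_2$) is the $\cot^2$ argument from the last paragraph of the proof of Proposition~\ref{prop:B2} --- the mod-$2$ relation forces $v_1'=v_2'$ and then $\cot^2\frac{\pi}{2w_1}=\cot^2\frac{\pi}{2w_1'}$ gives $w_1=w_1'$ by positivity and injectivity of $\cot$ on $(0,\pi/2)$ --- and the sub-case $v_1\ne v_2$ is handled by bringing in the refined zero/pole parity data. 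The step I expect to be the main obstacle is exactly this numerator/denominator bookkeeping: for integer weights the matching of the odd-integer zero and pole loci is precisely the content of Propositions~\ref{proposition:odd} and~\ref{prop:B2}, but for rational weights one must simultaneously keep track of which numerators are seen through zeros, which through poles and which are invisible, deal with the degenerate $\ell$ where a zero of one factor coincides with a pole of another, fuse this with the mod-$2$ reduction of the Milnor--Orlik divisor, and only then descend to the denominators --- and it is this multi-parameter accounting, rather than any new geometric phenomenon, that confines the clean statement to two variables.
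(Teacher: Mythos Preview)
Your approach is quite different from the paper's. The paper does not attempt to extend the Brieskorn-style zero/pole and mod-$2$ bookkeeping to rational weights at all. Instead it works directly with the congruence $P_f(t)\equiv P_g(t)\pmod{t+1}$: choosing a common multiple $m$ of the numerators $u_j,u_j'$ and setting $\eta=t^{1/m}$, the congruence says that an explicit polynomial $h(\eta)$ (with eight monomials, of degree at most roughly $3m/2$) is divisible by $\eta^m+1$. After using the $\ell=1$ cotangent identity once to order the weights as $w_1>w_1'\ge w_2'>w_2$, the rest is pure degree-counting on $h(\eta)$, split into two cases according to whether $\tfrac{1}{w_2}+\tfrac{1}{w_1'}+\tfrac{1}{w_2'}-\tfrac{1}{w_1}<1$; the first case is literally \cite[Lemma~3.1]{Saeki2000}, and the second is dispatched by a short reduction of $h(\eta)$ modulo $\eta^m+1$. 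No appeal to Proposition~\ref{prop:FM}, Lemma~\ref{lemma:lambda}, or any zero/pole matching is made.

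Your plan, by contrast, is a sketch rather than a proof, and at least one step does not go through as written. In your ``cleanest case'' (all numerators even, hence all denominators odd) you assert that the cross term in $\bigl(\tfrac{1}{v_1}\Lambda_{u_1}-1\bigr)\bigl(\tfrac{1}{v_2}\Lambda_{u_2}-1\bigr)$ drops out mod~$2$ because it carries the even factor $(u_1,u_2)$; but the actual coefficient is $(u_1,u_2)/(v_1v_2)$, which need not be an even integer, or an integer at all, and the linear terms carry $1/v_j$ rather than~$1$. The Milnor--Orlik divisor is integral only after the cancellations among the various $\Lambda$'s, so the term-by-term mod-$2$ reduction that works in the Brieskorn case ($v_j=1$) does not transfer, and you cannot conclude $\Lambda_{u_1}+\Lambda_{u_2}\equiv\Lambda_{u_1'}+\Lambda_{u_2'}\pmod 2$ this way. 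The remaining sub-cases --- matching numerators seen through zeros versus poles, the degenerate $\ell$ where a zero of one factor meets a pole of another, and the $u_1=u_2$, $v_1\neq v_2$ situation --- are correctly flagged as obstacles but not actually resolved. The paper's polynomial-divisibility argument sidesteps all of this bookkeeping at the price of a short case analysis on the size of $\sum 1/w_j$.
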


\begin{proof}
Set $w_j = u_j/v_j$ and $w_j' = u_j'/v_j'$,
$j = 1, 2$, where $u_j$ and $v_j$
(resp.\ $u_j'$ and $v_j'$) are relatively
prime positive integers.
Let $m$ be a common multiple of $u_1$, $u_2$, $u'_1$ and
$u_2'$. Then, by the same argument as
in the proof of \cite[Lemma~3.1]{Saeki2000},
we see that the polynomial
\begin{eqnarray*}
h(\eta) & = & -\eta^{m/w_1 + m/w_2 + m/w_1'}
-\eta^{m/w_1 + m/w_2 + m/w_2'} \\
& & \quad 
+\eta^{m/w_1 + m/w'_1 + m/w_2'}
+\eta^{m/w_2 + m/w'_1 + m/w_2'} \\
& & \quad + \eta^{m/w_1} + \eta^{m/w_2}
- \eta^{m/w'_1} - \eta^{m/w'_2}
\end{eqnarray*}
in $\eta$ is divisible by $\eta^m + 1$.

Since
$$\cot \frac{\pi}{2w_1} \cot \frac{\pi}{2w_2}
= \cot \frac{\pi}{2w'_1} \cot \frac{\pi}{2w'_2},$$
we may assume that $w_1 \geq w_1' \geq w_2' \geq w_2$.
Furthermore, if $w_1 = w_1'$, then we have $w_2 = w'_2$.
Therefore, we may assume
$$w_1 > w_1' \geq w_2' > w_2 (\geq 2).$$

If
\begin{equation}
\frac{1}{w_2} + \frac{1}{w'_1} + \frac{1}{w_2'}
- \frac{1}{w_1} < 1,
\label{eq:small}
\end{equation}
then by the same argument as in 
the proof of \cite[Lemma~3.1]{Saeki2000},
we have the desired conclusion.

If (\ref{eq:small}) does not hold, then we have
\begin{eqnarray*}
h(\eta)
& = &
-\eta^{m/w_1 + m/w_2 + m/w_1'}
-\eta^{m/w_1 + m/w_2 + m/w_2'} \\
& & \quad 
+\eta^{m/w_1 + m/w'_1 + m/w_2'}
+(\eta^m +1)\eta^{m/w_2 + m/w'_1 + m/w_2'-m} \\
& & \quad -\eta^{m/w_2 + m/w'_1 + m/w_2'-m} 
+ \eta^{m/w_1} + \eta^{m/w_2} \\
& & \quad - \eta^{m/w'_1} - \eta^{m/w'_2}.
\end{eqnarray*}
Since $m/w_2 + m/w'_1 + m/w_2'-m < m$,
in order that $h(\eta)$ be divisible by $\eta^m+1$,
we must have that $1/w_2 + 1/w'_1 + 1/w'_2 - 1$
is equal to $1/w_1 + 1/w_1' + 1/w_2'$, $1/w_1$ or
$1/w_2$. The first case does not occur, since
$w_1 > w_2 \geq 2$. In the third case, we
have $1/w'_1 + 1/w'_2 = 1$, which implies that
$w_1' = w_2' = 2$. This is a contradiction,
since $w'_2 > w_2 \geq 2$.
In the second case, we have
\begin{eqnarray*}
h(\eta)
& \equiv &
-\eta^{m/w_1 + m/w_2 + m/w_1'}
-\eta^{m/w_1 + m/w_2 + m/w_2'} \\
& & \quad 
+\eta^{m/w_1 + m/w'_1 + m/w_2'} + \eta^{m/w_2} \\
& & \quad - \eta^{m/w'_1} - \eta^{m/w'_2}
\mod \eta^m+1.
\end{eqnarray*}
Then the difference of the highest degree and the
lowest one of the right hand side is equal to
$$(m/w_1 + m/w_2 + m/w_2') - m/w_1',$$
which is strictly less than $m$, since
$1/w_2 + 1/w_2' = 1/w_1 - 1/w'_1 + 1$.
This is a contradiction.

Therefore, we must have $w_1 = w_1'$ and $w_2 = w_2'$.
This completes the proof.
\end{proof}

By using exactly the same argument as 
in \cite[Lemma~3.1]{Saeki2000},
we have the following.

\begin{proposition}
Let $f$ and $g$ be nondegenerate weighted homogeneous 
polynomials in $\C^{n+1}$ with weights
$(w_1, w_2, \ldots, w_{n+1})$ and
$(w_1', w_2', \ldots, w_{n+1}')$, respectively,
such that $w_j \geq 2$ and $w'_j \geq 2$ for all $j$.
Suppose that the Seifert forms of $f$ and $g$
are Witt equivalent over the real
numbers. 
If
$$\sum_{j=1}^{n+1}\frac{1}{w_j} +
\sum_{j=1}^{n+1}\frac{1}{w_j'} -
2 \min\left\{\frac{1}{w_1}, \ldots, \frac{1}{w_{n+1}},
\frac{1}{w_1'}, \ldots, \frac{1}{w_{n+1}'}\right\} < 1,$$
then we have $w_j = w_j'$, $j = 1, 2, \ldots, n+1$,
up to order.
\end{proposition}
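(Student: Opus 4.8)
The plan is to adapt the argument of \cite[Lemma~3.1]{Saeki2000} --- the same one used in the ``easy'' case of Proposition~\ref{prop:whp2} --- the point being that the stated inequality on the weights is exactly what puts us in that easy case.

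First I would invoke Theorem~\ref{thm1}, reducing the hypothesis to $P_f(t) \equiv P_g(t) \pmod{t+1}$. Writing $w_j = u_j/v_j$ and $w_j' = u_j'/v_j'$ in lowest terms, I would take $m$ to be a common multiple of $u_1, \dots, u_{n+1}, u_1', \dots, u_{n+1}'$, set $d_j = m/w_j$ and $d_j' = m/w_j'$ (positive integers), and put $\eta = t^{1/m}$; after the substitution $t = \eta^m$ the polynomial $P_f$ becomes $\prod_{j} \frac{\eta^m - \eta^{d_j}}{\eta^{d_j}-1} \in \Z[\eta]$, and the congruence says that $\eta^m + 1$ divides $P_f - P_g$ in $\Z[\eta]$. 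Multiplying through by $\prod_j(\eta^{d_j}-1)\prod_j(\eta^{d_j'}-1)$ and reducing modulo $\eta^m + 1$ (so that $\eta^m - \eta^{d_j} \equiv -(1+\eta^{d_j})$), exactly as in \cite[Lemma~3.1]{Saeki2000} one finds that
$$h(\eta) \;=\; \prod_{j=1}^{n+1}(1+\eta^{d_j})\prod_{j=1}^{n+1}(\eta^{d_j'}-1) \;-\; \prod_{j=1}^{n+1}(1+\eta^{d_j'})\prod_{j=1}^{n+1}(\eta^{d_j}-1)$$
is divisible by $\eta^m + 1$ in $\Z[\eta]$. (For a genuinely weighted homogeneous, as opposed to Brieskorn, polynomial the factors $\frac{\eta^m - \eta^{d_j}}{\eta^{d_j}-1}$ are generally not polynomials, so this clearing of denominators has to be done at the level of the full products, as in \cite{Saeki2000}.)

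Next I would bound the span of $h$. Expanding the products, each monomial of $h$ is $\pm\eta^{e}$ with $e = \sum_{j\in S}d_j + \sum_{j\in T}d_j'$ for subsets $S, T \subseteq \{1,\dots,n+1\}$, and a short sign check shows that the coefficient can be nonzero only when $|S|+|T|$ is odd. Hence the highest exponent in $h$ is at most $\big(\sum_j d_j + \sum_j d_j'\big) - \delta$ and the lowest is at least $\delta$, where $\delta = \min\{d_1,\dots,d_{n+1},d_1',\dots,d_{n+1}'\}$, so the difference between the highest and lowest degree of $h$ is at most
$$\sum_{j}d_j + \sum_j d_j' - 2\delta \;=\; m\left(\sum_{j}\frac{1}{w_j} + \sum_j\frac{1}{w_j'} - 2\min\left\{\tfrac{1}{w_1},\dots,\tfrac{1}{w_{n+1}},\tfrac{1}{w_1'},\dots,\tfrac{1}{w_{n+1}'}\right\}\right),$$
which by hypothesis is strictly less than $m$. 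Since a nonzero multiple of $\eta^m + 1$ has highest-minus-lowest degree at least $m$ (if $h = (\eta^m+1)q$ with $q\neq 0$, this difference for $h$ is $m$ plus that of $q$), it follows that $h = 0$, i.e. $\prod_j(1+\eta^{d_j})\prod_j(\eta^{d_j'}-1) = \prod_j(1+\eta^{d_j'})\prod_j(\eta^{d_j}-1)$ identically.

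Finally I would read the weights off this identity. Comparing, for each $e \geq 1$, the multiplicity of the cyclotomic polynomial $\Phi_e$ on the two sides (via $\eta^d - 1 = \prod_{e\mid d}\Phi_e$ and $\eta^d + 1 = \prod_{e\mid 2d,\, e\nmid d}\Phi_e$) gives $b(e) - b'(e) = 2(a(e) - a'(e))$, with $a(e) = \#\{j : e\mid d_j\}$, $b(e) = \#\{j : e\mid 2d_j\}$ and the primed versions defined with the $d_j'$. For odd $e$, $b(e) = a(e)$ and $b'(e) = a'(e)$, so $a(e) = a'(e)$; for even $e$, $b(e) = a(e/2)$ and $b'(e) = a'(e/2)$, so that $\delta(e/2) = 2\delta(e)$ where $\delta(e) = a(e) - a'(e)$, while $\delta(e) = 0$ for odd $e$. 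Since $a(e) = 0$ once $e$ exceeds every $d_j$, a nonzero $\delta(e)$ would force $e$ to be divisible by arbitrarily high powers of $2$, which is impossible; hence $a(e) = a'(e)$ for all $e$, so $\{d_1,\dots,d_{n+1}\} = \{d_1',\dots,d_{n+1}'\}$ as multisets and therefore $\{w_1,\dots,w_{n+1}\} = \{w_1',\dots,w_{n+1}'\}$. The hard part is this last extraction of the exponent multiset from the polynomial identity $h = 0$, together with the bookkeeping in the denominator-clearing for non-Brieskorn weighted homogeneous polynomials; the remainder is elementary.
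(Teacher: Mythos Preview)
Your proposal is correct and follows exactly the route the paper indicates: the paper's ``proof'' of this proposition is the single sentence that the argument of \cite[Lemma~3.1]{Saeki2000} applies verbatim, and you have faithfully reconstructed that argument---reducing via Theorem~\ref{thm1} to divisibility of $h(\eta)$ by $\eta^m+1$, using the parity constraint on $|S|+|T|$ to bound the span of $h$ strictly below $m$, and then reading off the multiset of exponents from the resulting identity $h=0$. Your final cyclotomic bookkeeping (the relation $\delta(e/2)=2\delta(e)$ forcing $\delta\equiv 0$) is a clean way to finish that is perhaps more explicit than what \cite{Saeki2000} writes, but it is the same idea.
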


\begin{remark}
By Proposition~\ref{prop:whp2},
we see that if the algebraic knots
associated with two weighted homogeneous
polynomials of two variables
are cobordant, then the polynomials have
the same set of weights. In fact, this fact
itself is a consequence of already known results
as follows.

If two algebraic knots in $S^3$ are
cobordant, then they are in fact
isotopic by virtue of the results of
L\^e \cite{Le} and Zariski \cite{Z}
(for details, see \cite[\S4]{BS}).
Then, by Yoshinaga-Suzuki \cite{YS2}
(see also \cite{Kang, Ni}), they have the
same set of weights.
\end{remark}


\begin{thebibliography}{99999}
%
\bibitem{BM}V.~Blanl\oe il and F.~Michel,  
{\em A theory of cobordism for non-spherical links}, 
Comment.\ Math.\ Helv.\ \textbf{72} (1997), 30--51.  
%
\bibitem{BS}V.~Blanl\oe il and O.~Saeki, 
{\em Cobordism of fibered knots and related topics},
in ^^ ^^ Singularities in geometry and topology 2004", 
pp.~1--47, Adv.\ Stud.\ Pure Math.\ \textbf{46}, 
Math.\ Soc.\ Japan, Tokyo, 2007.
%
\bibitem{DH}P.~Du Bois and O.~Hunault, 
{\em Classification des formes de Seifert rationnelles 
des germes de courbe plane},
Ann.\ Inst.\ Fourier (Grenoble) \textbf{46} (1996), 371--410.
%
\bibitem{Durfee}A.~Durfee, 
{\em Fibered knots and algebraic 
singularities}, 
Topology \textbf{13} (1974), 47--59.
%
\bibitem{Kang}C.~Kang, 
{\em Analytic types of plane curve singularities 
defined by weighted homogeneous polynomials},
Trans.\ Amer.\ Math.\ Soc.\ \textbf{352} (2000), 3995--4006. 
%
\bibitem{Kato}M.~Kato,
{\em A classification of simple
spinnable structures on a $1$-connected Alexander manifold},
J.\ Math.\ Soc.\ Japan \textbf{26} (1974), 454--463.
%
\bibitem{King}H.C.~King, {\em Topological type of isolated 
critical points}, Ann.\ of Math.\ (2) \textbf{107} (1978), 385--397.
%
\bibitem{Le}D.~T.~L\^e, 
{\em Sur les n\oe uds alg\'ebriques},
Compositio Math.\ \textbf{25} (1972), 281--321. 
%
\bibitem{L1}J.~Levine, 
{\em Knot cobordism groups in 
codimension two}, 
Comment.\ Math.\ Helv.\ \textbf{44} (1969),  
229--244.
%
\bibitem{L2}J.~Levine,
{\em Invariants of knot cobordism},
Invent.\ Math.\ \textbf{8} (1969), 98--110; 
addendum, ibid.\ \textbf{8} (1969), 355.
%
\bibitem{Milnor}J.~Milnor, 
{\sl Singular points of complex hypersurfaces}, 
Ann.\ of Math.\ Stud., Vol.~61, 
Princeton Univ.\ Press, Princeton, N.J.; Univ.\ of Tokyo Press, 
Tokyo, 1968. 
%
\bibitem{MO}J.~Milnor and P.~Orlik,
{\em Isolated singularities defined by weighted 
homogeneous polynomials},
Topology \textbf{9} (1970), 385--393. 
%
\bibitem{N}A.~N\'{e}methi, {\em The real Seifert form and 
the spectral pairs of isolated hypersurface singularities},
Compositio Math.\ \textbf{98} (1995), 23--41.
%
\bibitem{Neumann}W.D.~Neumann, 
{\em Invariants of plane curve singularities},
N\oe uds, tresses et singularit\'e
(Plans-sur-Bex, 1982), pp.~223--232, 
Monogr.\ Enseign.\ Math., Vol.~31, Enseignement Math., Geneva, 1983. 
%
\bibitem{Ni}T.~Nishimura,
{\em Topological invariance of weights for 
weighted homogeneous singularities},
Kodai Math.\ J.\ \textbf{9} (1986), 188--190. 
%
\bibitem{Perron}B.~Perron,
{\em Conjugaison topologique des germes de fonctions holomorphes \`a 
singularit\'e isol\'ee en dimension trois},
Invent.\ Math.\ \textbf{82} (1985), 27--35. 
%
\bibitem{Saeki88}O.~Saeki, 
{\em Cobordism classification of knotted homology $3$-spheres 
in $S^5$}, 
Osaka J.\ Math.\ \textbf{25} (1988), 213--222.
%
\bibitem{Saeki89}O.~Saeki,
{\em Topological types of complex isolated hypersurface singularities},
Kodai Math.\ J.\ \textbf{12} (1989), 23--29. 
%
\bibitem{Saeki2000}O.~Saeki, 
{\em Real Seifert form determines the spectrum for 
semiquasihomogeneous hypersurface singularities in $\C^3$},
J.\ Math.\ Soc.\ Japan \textbf{52} (2000), 409--431. 
%
\bibitem{Saito}K.~Saito,
{\em Quasihomogene isolierte Singularit\"{a}ten von 
Hyperfl\"{a}chen},
Invent.\ Math.\ \textbf{14} (1971), 123--142. 
%
\bibitem{Sakamoto}K.~Sakamoto,
{\em The Seifert matrices of Milnor fiberings 
defined by holomorphic functions},
J.\ Math.\ Soc.\ Japan \textbf{26} (1974), 714--721.
%
\bibitem{SSS}R.~Schrauwen, J.~Steenbrink, and J.~Stevens,
{\em Spectral pairs and the topology of curve singularities},
Complex geometry and Lie theory (Sundance, UT, 1989), pp.~305--328, 
Proc.\ Sympos.\ Pure Math., Vol.~53, Amer.\ Math.\ Soc., 
Providence, RI, 1991. 
%
\bibitem{Steenbrink}J.~H.~M.~Steenbrink, 
{\em Mixed Hodge structure on the vanishing cohomology}, 
in ``Real and complex singularities (P.~Holm, ed.)", 
Stijthoff-Noordhoff, Alphen a/d Rijn, 1977, pp.~525--563.
%
\bibitem{Steenbrink2}J.~H.~M.~Steenbrink,
{\em Intersection form
for quasihomogeneous singularities}, Compositio Math.\ 
\textbf{34} (1977), 211--223.
%
\bibitem{YS}E.~Yoshinaga and M.~Suzuki,
{\em On the topological types of singularities of 
Brieskorn-Pham type},
Sci.\ Rep.\ Yokohama Nat.\ Univ.\ Sect.\ I \textbf{25}
(1978), 37--43.
%
\bibitem{YS2}E.~Yoshinaga and M.~Suzuki, 
{\em Topological types of quasihomogeneous singularities 
in $\C^2$}, Topology \textbf{18} (1979), 113--116.
%
\bibitem{Z}O.~Zariski, 
{\em On the topology of algebroid singularities},
Amer.\ J.\ Math.\ \textbf{54} (1932), 453--465. 

\end{thebibliography}
\end{document}